\newtheorem{theorem}{Theorem}[section]
\newtheorem{lemma}[theorem]{Lemma}
\newtheorem{corollary}[theorem]{Corollary}
\theoremstyle{definition}
\newtheorem{remark}{Remark}[section]
\numberwithin{equation}{section}  
\title{\bf Bohr-Type Inequalities for Fractional Differential and Integral Operators}
\author[1]{Adesanmi Mogbademu}
\author[2]{Ismaila Amusa}
\affil[1]{Department of Mathematics, University of Lagos, Lagos, Nigeria\\
\texttt{amogbademu@unilag.edu.ng}}
\affil[2]{Department of Mathematics, Yaba College of Technology, Lagos, Nigeria\\
\texttt{shesmansecondclass@gmail.com}}
\date{ }
\begin{document}

\maketitle

\vspace{-2cm}

\let\thefootnote\relax
\footnote{\textbf{MSC (2020):} Primary: 30A10, 26A33; Secondary: 30H30, 30C50}

\let\thefootnote\relax
\footnote{\textbf{Keywords:} Bohr inequalty,  Bloch function, fractional derivative, fractional integral, convex functions, univalent functions}

\let\thefootnote\relax
\footnote{\textbf{Corresponding Author:} Ismaila Amusa (ismaila.amusa@yabatech.edu.ng)}

\begin{abstract}
\noindent
We study Bohr-type inequalities within the framework of fractional calculus. Using Riemann–Liouville fractional differential and integral operators, we establish generalized Bohr radii for analytic functions in the unit disk, including the classes of univalent, convex, and Bloch functions. The results unify earlier cases, recover the classical Bohr inequality as a limiting instance, and show how the Bohr radius decreases with the fractional order. Numerical computations further illustrate how the radii vary with the fractional order, highlighting the transition from the classical to the fractional setting.
\end{abstract}

\section{Introduction}
The Bohr inequality, introduced by Harald Bohr \cite{8} in 1914, is a remarkable result in the theory of bounded analytic functions. Let $\mathcal{B}$ denote the class of bounded analytic functions $f(z)=\sum_{n=0}^{\infty}a_nz^n$ in the unit disc $\mathbb{D}:=\{z\in\mathbb{C}: \: |z|<1\}$ such that $|f(z)|<1$ for all $z\in\mathbb{D}$, the Bohr inequality is stated as follows:
\begin{theorem}[Bohr Inequality]
\label{thm1.1}
Suppose that $f(z)=\sum\limits_{n=0}^\infty a_n z^n \in \mathcal{B}$. Then\vspace{-0.2cm}
\begin{equation}
\label{eq1.1}
\sum_{n=0}^\infty |a_n| r^n \leq 1, \quad for \quad r \leq \frac{1}{3}.\vspace{-0.2cm}
\end{equation}
The constant $1/3$ is best possible.
\end{theorem}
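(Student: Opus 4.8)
The plan is to deduce \eqref{eq1.1} from a sharp pointwise bound on the Taylor coefficients of a member of $\mathcal{B}$, and then to certify optimality of the constant $1/3$ by exhibiting an explicit extremal family of disk automorphisms. Throughout one may set $t := |a_0| = |f(0)|$, noting $t < 1$ since $|f|<1$, so the only real content concerns the tail $\sum_{n\ge 1}|a_n|r^n$.

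The crucial estimate is
\[
|a_n| \le 1 - |a_0|^2 \qquad (n\ge 1),
\]
which I would obtain by an averaging device that collapses the $n$th coefficient onto a first–coefficient estimate. Fix $n\ge 1$, put $\zeta := e^{2\pi i/n}$, and set $g(z) := \tfrac1n\sum_{j=0}^{n-1} f(\zeta^j z)$. Then $g\in\mathcal{B}$ (an average of functions bounded by $1$), and since $\tfrac1n\sum_{j=0}^{n-1}\zeta^{jm}$ equals $1$ when $n\mid m$ and $0$ otherwise, one gets $g(z) = h(z^n)$ with $h(w) = a_0 + a_n w + a_{2n}w^2 + \cdots$. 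Because $z\mapsto z^n$ maps $\mathbb{D}$ onto $\mathbb{D}$, the function $h$ is analytic and bounded by $1$ on $\mathbb{D}$, with $h(0) = a_0$ and $h'(0) = a_n$. Applying the Schwarz lemma to $w\mapsto \frac{h(w)-a_0}{1-\overline{a_0}\,h(w)}$, which maps $\mathbb{D}$ into $\mathbb{D}$ and vanishes at the origin, and differentiating at $0$, yields $|a_n| = |h'(0)| \le 1-|a_0|^2$, as claimed.

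With this in hand, for $0\le r<1$,
\[
\sum_{n=0}^{\infty}|a_n|r^n \;\le\; t + (1-t^2)\sum_{n=1}^{\infty} r^n \;=\; t + (1-t^2)\,\frac{r}{1-r}.
\]
Since $t<1$, the inequality $t + (1-t^2)\tfrac{r}{1-r}\le 1$ is equivalent, after dividing by $1-t>0$, to $(1+t)\tfrac{r}{1-r}\le 1$; the left–hand side is increasing in $t$ and has supremum $\tfrac{2r}{1-r}$ over $t\in[0,1)$, so the bound holds for every admissible $f$ exactly when $\tfrac{2r}{1-r}\le 1$, i.e. $r\le\tfrac13$. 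This proves \eqref{eq1.1}.

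For sharpness I would test the automorphisms $f_a(z) = \frac{a-z}{1-az}\in\mathcal{B}$ for $a\in(0,1)$, whose expansion $f_a(z) = a - (1-a^2)\sum_{n\ge 1}a^{n-1}z^n$ gives $\sum_{n\ge 0}|a_n|r^n = a + (1-a^2)\tfrac{r}{1-ar}$. This expression equals $1$ at $a=1$ and has derivative in $a$ equal to $\tfrac{1-3r}{1-r}$ there, which is negative as soon as $r>\tfrac13$; hence for every $r>\tfrac13$ the Bohr sum exceeds $1$ for $a$ sufficiently close to $1$, so $1/3$ cannot be enlarged. The one genuinely non-routine point is the coefficient inequality $|a_n|\le 1-|a_0|^2$: the averaging step that reduces the $n$th coefficient to a Schwarz-type first–coefficient estimate is the heart of the matter, while everything downstream is elementary one-variable optimization.
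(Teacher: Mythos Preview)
Your proof is correct and follows essentially the same route the paper takes in its proof of Theorem~\ref{thm3.1} (which at $\alpha\to 0$ recovers Theorem~\ref{thm1.1}): invoke the coefficient bound $|a_n|\le 1-|a_0|^2$, majorize the Bohr sum by $t+(1-t^2)\sum_{n\ge1}r^n$, optimize over $t=|a_0|$, and certify sharpness with the M\"obius family $f_a(z)=(a-z)/(1-az)$. The only substantive addition on your side is that you supply a self-contained proof of $|a_n|\le 1-|a_0|^2$ via root-of-unity averaging and the Schwarz lemma, whereas the paper simply cites it; your algebraic optimization (dividing by $1-t$) and the paper's calculus argument (concavity of $P(a)$, monotonicity, $P(a)\le P(1)$) are equivalent rephrasings of the same one-variable step.
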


\noindent The number ${1}/{3}$ is known as the Bohr radius, and it is sharp. This inequality, though simple in form, has had far-reaching implications in complex analysis, operator theory, and functional analysis.\\
Over the years, the Bohr phenomenon has undergone numerous significant extensions and refinements. In the theory of single complex variable, it has been generalized to cover derivatives, integrals, subordinate functions, harmonic mappings, and functions belonging to special subclasses such as starlike and convex functions (see \cite{1, 2,3,4,7}). Weighted versions, logarithmic refinements, and Bohr-type inequalities in Hardy, Bergman, and Dirichlet spaces have also been established (see \cite{2}). In higher dimensions, the Bohr radius has been studied in the polydisc and the Euclidean unit ball, where its behavior is strikingly different. For instance, Boas and Khavinson \cite{7} showed that the Bohr radius in several variables decays to zero with increasing dimension, in sharp contrast to the classical one-variable case. The asymptotic behavior, refined by Bayart et al. \cite{6} demonstrates that the Bohr radius in $n$ dimensions is equivalent to $\sqrt{(\log n)/n}$.

\noindent  Applications extend to geometric function theory, where coefficient majorization and growth estimates play a central role, as well as to harmonic analysis and problems in several complex variables.\vspace{0.2cm} \\
\noindent  A natural direction of generalization arises when one replaces classical differentiation or integration by their fractional counterparts. Fractional calculus interpolates between differentiation and integration and has proven powerful in both pure and applied mathematics. The study of Bohr-type inequalities for fractional derivatives is motivated by the desire to unify the inequalities known for $f$, $f'$, and the integral operator into a single continuous framework parameterized by the order of differentiation $\alpha$. This investigation not only deepens the understanding of coefficient multiplier inequalities but also opens the way to new applications in geometric function theory, univalent function classes, and harmonic mappings.

\indent The aim of this paper is to establish Bohr-type inequalities for analytic functions acted upon by fractional differential and integral operators. We present general principles, prove specific theorems, and discuss their consequences.

\section{Preliminaries}
Let $H(\mathbb{D})$ denote the class of analytic functions in the unit disc and $H^{\infty}(\mathbb{D})$ the space of bounded analytic functions. Every $f\in H(\mathbb{D})$ admits a power series representation
\[
f(z) = \sum_{n=0}^\infty a_n z^n, \quad z\in \mathbb{D}.
\]

\noindent We recall the definitions of fractional operators relevant to our study. The Riemann–Liouville fractional integral of  order $\alpha$ $(0<\alpha<1)$ is defined by (see \cite{13,15}) 
\begin{equation}
\label{eq2.1}
 I^\alpha f(z) = \frac{1}{\Gamma(\alpha)} \int_0^z (z-\zeta)^{\alpha-1} f(\zeta)\, d\zeta = \sum_{n=0}^{\infty} a_n \frac{\Gamma(n+1)}{\Gamma(n+\alpha+1)} z^{n+\alpha},
\end{equation}
while the corresponding fractional derivative is given by
\begin{equation}
\label{eq2.2}
D^\alpha f(z) =  \frac{1}{\Gamma(1-\alpha)} \frac{d}{dz} \int_0^z (z-\zeta)^{-\alpha} f(\zeta)\, d\zeta =  \sum_{n=0}^{\infty} a_n \frac{\Gamma(n+1)}{\Gamma(n+1-\alpha)} z^{n-\alpha}.
\end{equation}

\noindent In view of \eqref{eq2.1} and \eqref{eq2.2}, we define the Bohr sum (majorant series) associated with $D^\alpha f(z) $ and $I^\alpha f(z)$ as follows:

\begin{equation}
\label{eq2.3}
 B[I^\alpha f] = \sum_{n=0}^{\infty} |a_n| \frac{\Gamma(n+1)}{\Gamma(n+\alpha+1)} r^{n+\alpha},
\end{equation}
 and 
\begin{equation}
\label{eq2.4}
 B[D^\alpha f] = \sum_{n=0}^{\infty} |a_n| \frac{\Gamma(n+1)}{\Gamma(n+1-\alpha)} r^{n-\alpha},
\end{equation}
If $F(z)=\sum_{n=0}^{\infty}b_nz^n$, we say that $F(z)$ satisfies a Bohr Phenomenon  for $R \in (0,1)$, if the inequality
\begin{equation}
\label{eq2.5}
 \sum\limits_{n=1}^{\infty} |b_n|r^n \le \text{dist} \big( F(0), \partial F(\mathbb{D}) \big), \text{ } r=|z|,
\end{equation}
holds for $|z|=r \le R$. Here, $\text{dist}\left( F(0), \partial F(\mathbb{D}) \right)$ denotes the Euclidean distance between $F(0)$ and the boundary $\partial$ of the domain $F(\mathbb{D})$. The largest $R$ is called Bohr radius  (see \cite{2}). Extending this notion to fractional differential and integral transforms of analytic functions forms the subject of this work.\vspace{0.2cm}\\
\noindent In geometric function theory, one often studies the class $\mathcal{S}$ of univalent (i.e.\ injective) holomorphic functions 
\begin{equation}
\label{eq2.6}
f(z)= z+ \sum\limits_{n=2}^{\infty}a_nz^n
\end{equation}
normalized  by $a_0=a_1-1=0$ in $\mathbb{D}$. The Bieberbach conjecture (which was later proved by De Branges\cite{9}) guarantees that the coefficient of each $f \in \mathcal{S}$  satisfy $|a_n| \le n$  for  $n \ge 2$, and the strict inequality holds for all $n$, unless $f$ is the Koebe function $k(z)=z(1-z)^{-2}$ or one of its rotations (see \cite{10}). We also denote the class of convex functions (which is the subclass of univalent) by $\mathcal{C}$. The coefficients of $f\in \mathcal{C}$ satisfy the bound $|a_n|\le1$, $n\ge1$ (see \cite{10}). The following result is well-known in the literature.

\begin{theorem}[see {\cite{10}}] 
\label{thm2.1}
Suppose $f(z)= z+  \sum\limits_{n=1}^{\infty}a_nz^n$, and if
\renewcommand{\theenumi}{\roman{enumi}}
\begin{enumerate}
\item  $f \in \mathcal{S }$, \textit{then } $\dfrac{|z|}{(1+|z|)^2} \le |f(z)| \le \dfrac{|z|}{(1-|z|)^2}$, \quad $z \in \mathbb{D}$
\item $f \in \mathcal{C}$, \textit{then} $\dfrac{|z|}{1+|z|} \le |f(z)| \le \dfrac{|z|}{1-|z|}$, \quad $z \in \mathbb{D}$.
\end{enumerate}
\end{theorem}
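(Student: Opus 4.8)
The statement is the classical growth theorem for the classes $\mathcal{S}$ and $\mathcal{C}$, so the plan is to recover each inequality from the corresponding distortion (first–derivative) estimate, which in turn follows from Bieberbach's coefficient bound via the Koebe transform. First, for (i), I would start from Bieberbach's theorem $|a_2|\le 2$ for every $f\in\mathcal{S}$ (obtained by applying the area theorem to $\sqrt{f(z^2)}$). For a fixed $\zeta\in\mathbb{D}$, the Koebe transform
\[
F_\zeta(z)=\frac{f\!\left(\dfrac{z+\zeta}{1+\overline{\zeta}z}\right)-f(\zeta)}{(1-|\zeta|^2)\,f'(\zeta)}
\]
again belongs to $\mathcal{S}$; computing its second Taylor coefficient and imposing $|a_2(F_\zeta)|\le 2$ gives
\[
\left|(1-|\zeta|^2)\frac{f''(\zeta)}{f'(\zeta)}-2\overline{\zeta}\right|\le 4,\qquad \zeta\in\mathbb{D}.
\]
After a rotation $f(z)\mapsto e^{-i\theta}f(e^{i\theta}z)$ it suffices to estimate at $z=r\in[0,1)$, and the inequality above bounds $r\,\frac{d}{dr}\log|f'(r)|=\mathrm{Re}\!\left(r f''(r)/f'(r)\right)$ between $\frac{2r^2-4r}{1-r^2}$ and $\frac{2r^2+4r}{1-r^2}$; integrating over $[0,r]$ yields the distortion theorem $\frac{1-r}{(1+r)^3}\le |f'(z)|\le \frac{1+r}{(1-r)^3}$, with $r=|z|$.

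Second, I would integrate these derivative bounds. For the upper estimate, integrate $|f'|$ along the radius $[0,z]$: $|f(z)|\le\int_0^r \frac{1+t}{(1-t)^3}\,dt=\frac{r}{(1-r)^2}$. For the lower estimate I would invoke the Koebe one–quarter theorem: if $|f(z)|\ge 1/4$ there is nothing to prove, since $\frac{r}{(1+r)^2}\le\frac14$ for all $r\in[0,1)$; otherwise the segment $[0,f(z)]$ lies in $f(\mathbb{D})$, and pulling it back by $f^{-1}$ to an arc $\Gamma$ from $0$ to $z$ gives
\[
|f(z)|=\int_\Gamma |f'(\zeta)|\,|d\zeta|\ge \int_\Gamma \frac{1-|\zeta|}{(1+|\zeta|)^3}\,|d\zeta|\ge \int_0^r \frac{1-t}{(1+t)^3}\,dt=\frac{r}{(1+r)^2},
\]
where the last inequality is just the monotonicity of the primitive $t\mapsto t/(1+t)^2$ applied along $\Gamma$. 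This proves (i).

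Third, for (ii) I would run the parallel argument inside $\mathcal{C}$. Convexity of $f$ is equivalent to $\mathrm{Re}\!\left(1+zf''(z)/f'(z)\right)>0$, so $p:=1+zf''/f'$ is normalized with positive real part, and the sharp estimates $\frac{1-r}{1+r}\le\mathrm{Re}\,p(z)\le\frac{1+r}{1-r}$ on $|z|=r$ give, exactly as above after dividing by $r$ and integrating, the convex distortion bounds $\frac{1}{(1+r)^2}\le|f'(z)|\le\frac{1}{(1-r)^2}$. Integrating the upper bound along $[0,z]$ gives $|f(z)|\le\frac{r}{1-r}$; for the lower bound one uses that a normalized convex image contains the disc $\{|w|<1/2\}$, so either $|f(z)|\ge 1/2\ge \frac{r}{1+r}$, or, pulling back $[0,f(z)]$ and using that $t\mapsto t/(1+t)$ is increasing, $|f(z)|\ge\int_0^r\frac{dt}{(1+t)^2}=\frac{r}{1+r}$.

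The main obstacle is not any single computation but the correct ordering and packaging of the classical ingredients: verifying that the Koebe transform preserves $\mathcal{S}$ (and, in the convex case, recasting convexity as a positive–real–part condition), and noticing that both \emph{lower} bounds genuinely require the covering theorems — radius $1/4$ for $\mathcal{S}$ and radius $1/2$ for $\mathcal{C}$ — together with the fact that replacing the pulled–back arc $\Gamma$ by a radius can only decrease the integral of the derivative lower bound. Sharpness of (i) is witnessed by the Koebe function $k(z)=z(1-z)^{-2}$ and of (ii) by the half–plane map $z/(1-z)$ and their rotations.
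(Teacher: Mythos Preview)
The paper does not supply its own proof of Theorem~\ref{thm2.1}; it merely quotes the classical growth theorems with a reference to Duren~\cite{10}. Your argument is precisely the standard textbook derivation found there (Bieberbach $\Rightarrow$ Koebe transform $\Rightarrow$ distortion $\Rightarrow$ growth) and is correct. One small simplification in part~(ii): because $f(\mathbb{D})$ is convex and contains both $0=f(0)$ and $f(z)$, the segment $[0,f(z)]$ automatically lies in $f(\mathbb{D})$, so the pull-back step works for every $z\in\mathbb{D}$ and the $1/2$--covering theorem is not actually needed.
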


\noindent In \cite{2}, the following theorem was established on Bohr inequality for the class of univalent and convex functions:

\begin{theorem}[\cite{2}]
\label{thm2.2}
Let \( f \) and \( g \) be analytic in \( \mathbb{D} \), with \( g \) univalent in \( \mathbb{D} \), and suppose that \( f \prec g \) (i.e., \( f \) is subordinate to \( g \)). Then the inequality \eqref{eq2.5} holds for \( r\le R = 3 - 2\sqrt{2} \). The sharpness of this radius is demonstrated by the Koebe function \( g(z) = z/(1 - z)^{2} \). Moreover, if \( f \) is convex in \( \mathbb{D} \), then the Bohr radius improves to \( R = 1/3 \) .
\end{theorem}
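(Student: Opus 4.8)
\noindent The plan is to reduce the Bohr inequality \eqref{eq2.5} to a single one-variable inequality in $r$ by controlling the Taylor coefficients of the subordinate function. Write $f(z)=\sum_{n=0}^\infty b_nz^n$ and $g(z)=\sum_{n=0}^\infty c_nz^n$, and set $d:=\operatorname{dist}\big(g(0),\partial g(\mathbb D)\big)$; since $f\prec g$ forces $f(0)=g(0)$ and $f(\mathbb D)\subseteq g(\mathbb D)$, the number $d$ is the natural distance from $f(0)$ to the boundary of the range in \eqref{eq2.5}, and it coincides with $\operatorname{dist}(f(0),\partial f(\mathbb D))$ in the extremal case $f=g$ used for sharpness. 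Thus the goal is to show $\sum_{n\ge1}|b_n|r^n\le d$ for $r\le 3-2\sqrt2$. From $f\prec g$ we write $f=g\circ\omega$ with $\omega$ a Schwarz function, which enters the argument only through its action on coefficients.

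First I would establish the coefficient bound $|b_n|\le n\,|c_1|$ for every $n\ge1$. Passing to the normalized map $\varphi:=(g-g(0))/g'(0)\in\mathcal S$ and noting that $f\prec g$ is equivalent to $(f-g(0))/g'(0)\prec\varphi$, this is precisely the coefficient estimate for functions subordinate to a univalent function (Rogosinski's conjecture), which in its sharp form is a consequence of the de~Branges theorem \cite{9} recalled in Section~2. The Koebe one-quarter theorem applied to $g$ gives $g(\mathbb D)\supseteq\{w:|w-g(0)|<\tfrac14|g'(0)|\}$, hence $|c_1|=|g'(0)|\le 4d$; combining, $|b_n|\le 4n\,d$ for all $n\ge1$. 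Summing the resulting series,
\[
\sum_{n=1}^\infty|b_n|\,r^n\ \le\ 4d\sum_{n=1}^\infty n\,r^n\ =\ \frac{4d\,r}{(1-r)^2},
\]
so \eqref{eq2.5} holds as soon as $4r\le(1-r)^2$, that is $r^2-6r+1\ge0$, that is $r\le 3-2\sqrt2$. For sharpness take $f=g=k$, the Koebe function $k(z)=z/(1-z)^2=\sum_{n\ge1}nz^n$, for which $d=\tfrac14$ (the Koebe radius, attained by $k$); then $\sum_{n\ge1}nr^n=r/(1-r)^2$ equals $\tfrac14$ exactly at $r=3-2\sqrt2$, so no larger radius is admissible.

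For the convex refinement one can bypass the majorant and work with $f$ directly. If $f$ is convex (hence univalent), then $f(\mathbb D)$ is a convex domain, so the covering theorem for convex functions gives $d_f:=\operatorname{dist}(f(0),\partial f(\mathbb D))\ge\tfrac12|f'(0)|$, while the coefficient bound $|b_n|\le|b_1|=|f'(0)|$ for convex functions (recalled in Section~2) yields $|b_n|\le 2d_f$ for all $n\ge1$. Hence
\[
\sum_{n=1}^\infty|b_n|\,r^n\ \le\ 2d_f\sum_{n=1}^\infty r^n\ =\ \frac{2d_f\,r}{1-r},
\]
which is $\le d_f$ precisely when $r\le 1/3$; the half-plane map $g(z)=z/(1-z)=\sum_{n\ge1}z^n$, convex with $d=\tfrac12$, shows that $1/3$ cannot be enlarged.

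\noindent The analytic heart of the argument --- and the only step I expect to be a genuine obstacle --- is the sharp subordination coefficient inequality $|b_n|\le n|c_1|$ (Rogosinski's conjecture, available only through the de~Branges theorem), together with its much stronger convex counterpart $|b_n|\le|c_1|$; once these are granted, everything else is routine bookkeeping: the Koebe and convex covering theorems, a geometric series, and a quadratic inequality. A secondary point requiring care is that $f$ need not be univalent, so $\operatorname{dist}(f(0),\partial f(\mathbb D))$ is delicate in general; this is why the estimate is organized around the univalent majorant $g$, the two distances agreeing in the extremal configurations $f=g$ that establish sharpness.
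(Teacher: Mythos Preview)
The paper does not supply a proof of Theorem~\ref{thm2.2}; it is quoted from \cite{2} as background in the Preliminaries and never argued in the text. There is therefore nothing in the manuscript to compare your proposal against.

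That said, your argument is the standard one and is correct. The reduction via Rogosinski's subordination inequality $|b_n|\le n|g'(0)|$ (a consequence of de~Branges' theorem), combined with the Koebe one-quarter theorem to convert $|g'(0)|$ into $4d$, yields exactly the quadratic $r^2-6r+1\ge 0$ and the radius $3-2\sqrt2$, with the Koebe function giving sharpness. In the convex case your use of the coefficient bound $|b_n|\le|b_1|$ and the convex covering constant $\tfrac12$ is likewise the textbook route to $R=1/3$. Two minor points worth tightening: (i) the theorem as stated in the paper (and in \cite{2}) takes the distance on the right of \eqref{eq2.5} with respect to the univalent majorant $g$, not $f$; you acknowledge this, but it should be said plainly rather than as a caveat, since for non-univalent $f$ the quantity $\operatorname{dist}(f(0),\partial f(\mathbb D))$ need not dominate your bound; (ii) in the convex refinement the intended hypothesis in \cite{2} is that the \emph{superordinate} $g$ is convex, in which case one invokes Rogosinski's theorem $|b_n|\le|g'(0)|$ for $f\prec g$ with $g$ convex --- your closing paragraph alludes to this, but the body of your convex argument treats $f$ itself as convex. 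Either reading produces $R=1/3$, but matching the source requires the $g$-convex version.
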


\section{Main Results}
Our main results are organized as follows. In Section~\ref{sec3.1}, we establish Bohr inequalities for the fractional derivative of functions in the class $\mathcal{B}$. Section~\ref{sec3.2} extends the discussion to Bohr-type inequalities for the fractional derivative of univalent functions $f \in \mathcal{S}$ and convex functions $f \in \mathcal{C}$. In Section~\ref{sec3.3}, we determine the Bohr radius for the fractional integral of functions in $\mathcal{B}$. Finally, Section~\ref{sec3.4} is devoted to the Bohr radius for the fractional derivative of bounded analytic Bloch functions. Numerical computations supporting and illustrating the theoretical results are provided in each section.

\subsection{Bohr radius for Fractional Derivative of Analytic Functions}
\label{sec3.1}

\begin{theorem}
\label{thm3.1}
Suppose $f(z)\in \mathcal{B}$ and $f(z)=\sum_{n=0}^\infty a_n z^n$. Then
\begin{equation}
\label{eq3.1}
B[D^\alpha f] = \sum_{n=0}^\infty \frac{\Gamma(n+1)}{\Gamma(n+1-\alpha)} |a_n| r^{n-\alpha} \leq \frac{r^{-\alpha}}{\Gamma(1-\alpha)}, \quad 0<\alpha<1,
\end{equation}
for $r \leq R(\alpha)$, where $R(\alpha)$ is the minimum positive solution of
\[
\sum_{n=1}^\infty \frac{\Gamma(n+1)}{\Gamma(n+1-\alpha)} r^n = \frac{1}{2\Gamma(1-\alpha)}.
\]
\end{theorem}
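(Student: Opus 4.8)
The plan is to reduce the inequality to the standard coefficient estimates for the class $\mathcal{B}$ together with an elementary monotonicity argument for a single real-variable series. Recall that if $f(z)=\sum_{n\ge 0}a_nz^n\in\mathcal{B}$, then $|a_0|\le 1$ and $|a_n|\le 1-|a_0|^2$ for every $n\ge 1$. If $|a_0|=1$, the maximum principle forces $f$ to be a unimodular constant, so $a_n=0$ for $n\ge 1$ and \eqref{eq3.1} holds with equality in its only nonzero term; hence we may assume $|a_0|<1$ from now on.

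First I would separate the $n=0$ term of $B[D^\alpha f]$, which equals $|a_0|\,r^{-\alpha}/\Gamma(1-\alpha)$, and bound the tail using $|a_n|\le 1-|a_0|^2$:
\[
B[D^\alpha f]\ \le\ \frac{|a_0|\,r^{-\alpha}}{\Gamma(1-\alpha)}+(1-|a_0|^2)\,r^{-\alpha}\,S(r),\qquad
S(r):=\sum_{n=1}^{\infty}\frac{\Gamma(n+1)}{\Gamma(n+1-\alpha)}\,r^{n}.
\]
Since $\Gamma(n+1)/\Gamma(n+1-\alpha)\sim n^{\alpha}$ as $n\to\infty$, the series $S$ converges on $[0,1)$, is strictly increasing there with $S(0)=0$, and $S(r)\to\infty$ as $r\to 1^-$; in particular the equation $S(r)=1/(2\Gamma(1-\alpha))$ has a unique root in $(0,1)$, namely $R(\alpha)$.

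Next I would note that, after multiplying through by $\Gamma(1-\alpha)\,r^{\alpha}>0$ (which cancels the factor $r^{-\alpha}$ on both sides), the desired bound $B[D^\alpha f]\le r^{-\alpha}/\Gamma(1-\alpha)$ is equivalent to $|a_0|+\Gamma(1-\alpha)(1-|a_0|^2)S(r)\le 1$, and hence — dividing by $1-|a_0|>0$ and using $1-|a_0|^2=(1-|a_0|)(1+|a_0|)$ — to
\[
\Gamma(1-\alpha)\,(1+|a_0|)\,S(r)\ \le\ 1 .
\]
The left-hand side is increasing in $|a_0|$, so the extremal case is $|a_0|\to 1$, which reduces the problem to $2\Gamma(1-\alpha)S(r)\le 1$, i.e.\ $S(r)\le 1/(2\Gamma(1-\alpha))$; by the monotonicity of $S$ this holds exactly for $r\le R(\alpha)$, which is the assertion. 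As a consistency check, letting $\alpha\to 0^+$ gives $S(r)=r/(1-r)$ and the threshold equation $r/(1-r)=1/2$, so $R(0)=1/3$, recovering Theorem~\ref{thm1.1}.

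I expect the computations to be routine; the only points requiring genuine care are justifying the convergence and strict monotonicity of $S$ on $[0,1)$ through the asymptotics of the Gamma-function ratio, and the bookkeeping that splits $1-|a_0|^2$ so that the worst case $|a_0|\to 1$ can be extracted cleanly. Neither is a real obstacle, so the substance of the proof is the reduction to the scalar inequality $S(r)\le 1/(2\Gamma(1-\alpha))$ and the identification of $R(\alpha)$ as its threshold.
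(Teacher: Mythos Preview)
Your proposal is correct and follows essentially the same approach as the paper: both use the Schwarz--Pick coefficient bound $|a_n|\le 1-|a_0|^2$ to reduce the problem to maximizing the resulting expression over $|a_0|\in[0,1)$, and both identify the threshold $R(\alpha)$ via the scalar equation $S(r)=1/(2\Gamma(1-\alpha))$. The only cosmetic difference is that the paper carries out the maximization by a second-derivative (concavity) argument on $P(a)$, whereas you factor $1-|a_0|^2=(1-|a_0|)(1+|a_0|)$ and cancel $1-|a_0|$ directly, which is slightly more economical but not a genuinely different route.
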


\begin{proof}
Let $|a_0|=a \in [0, 1)$ and $f\in \mathcal{B}$. Then $|a_n|\le 1-|a_0|^2 = 1-a^2$, $n\ge1$ (see \cite{2}). Therefore,
\[ 
\begin{split}
B[D^\alpha f] &= \sum_{n=0}^\infty |a_n|\frac{\Gamma(n+1)}{\Gamma(n+1-\alpha)} r^{n-\alpha} = |a_0| \frac{r^{-\alpha}}{\Gamma(1-\alpha)} + \sum_{n=1}^\infty |a_n|\frac{\Gamma(n+1)}{\Gamma(n+1-\alpha)} r^{n-\alpha}\\
& \le a\frac{r^{-\alpha}}{\Gamma(1-\alpha)} + (1-a^2)\sum_{n=1}^\infty \frac{\Gamma(n+1)}{\Gamma(n+1-\alpha)} r^{n-\alpha} .
\end{split}
\]
Setting
\begin{equation}
\label{eq3.2}
B[D^\alpha f] \le P(a)= a\frac{r^{-\alpha}}{\Gamma(1-\alpha)} + (1-a^2)\sum_{n=1}^\infty \frac{\Gamma(n+1)}{\Gamma(n+1-\alpha)} r^{n-\alpha}.
\end{equation}
Differentiating $P(a)$ twice w.r.t. $a$ to get
\[ 
P'(a)= \frac{r^{-\alpha}}{\Gamma(1-\alpha)} -2a\sum_{n=1}^\infty \frac{\Gamma(n+1)}{\Gamma(n+1-\alpha)} r^{n-\alpha} 
\]
and 
\[ 
P''(a)= -2\sum_{n=1}^\infty \frac{\Gamma(n+1)}{\Gamma(n+1-\alpha)} r^{n-\alpha}.
\]
It is easy to see that $P''(a)\le0$ for all $a \in [0, 1)$ and $r\in (0, 1)$.  Since $a<1$, obviously,
$P'(a)\ge P'(1)$. That is,
\[ 
P'(a) \ge \frac{r^{-\alpha}}{\Gamma(1-\alpha)} -2\sum_{n=1}^\infty \frac{\Gamma(n+1)}{\Gamma(n+1-\alpha)} r^{n-\alpha} \ge 0,
\]
which holds for $r\in (0,1)$ only if $r\le R(\alpha)$, where $R(\alpha)$ is the minimum positive root of
\[
\frac{r^{-\alpha}}{\Gamma(1-\alpha)} =2\sum_{n=1}^\infty \frac{\Gamma(n+1)}{\Gamma(n+1-\alpha)} r^{n-\alpha}.
\]
Also, $P(a)$ is an increasing function of $a$, for $r\le R(\alpha)$, which implies that $P(a)\le P(1)$. So, from \eqref{eq3.2}
\[
B[D^\alpha f] \le P(a)\le P(1)= \frac{r^{-\alpha}}{\Gamma(1-\alpha)},
\]
for $r\le R(\alpha)$. This completes the proof of Theorem \ref{thm3.1}.
\end{proof}

\begin{remark}
The equality in Theorem \ref{thm3.1} is attained for the function \[f(z)= \dfrac{a-z}{1-az}, \qquad a\in[0,1).\] 
\end{remark}

\begin{remark}
\noindent In the limiting case $\alpha \to 0$, we recover the classical Bohr inequality together with the Bohr radius $1/3$ stated in Theorem \ref{thm1.1}.
\end{remark}

\noindent Table \ref{tab1} gives values of the Bohr radii $R(\alpha)$ for some $\alpha \in [0,1)$.

\begin{table}[H]
\centering
\begin{tabular}{|c|c|c|c|c|c|}
\hline
{\bf $\alpha$} &0&0.2&0.5&0.8&0.99 \\
\hline
{\bf $R(\alpha)$} &0.33333&0.30841&0.28301&0.27026&0.26796\\
\hline
\end{tabular}
\caption{Computed values of Bohr radii $R(\alpha)$ for some $\alpha$-values.}
\label{tab1}
\end{table}

\noindent Although setting $\alpha = 1$ in \eqref{eq2.2} yields the first derivative of $f(z)$, but the Bohr radius does not exist in the limiting case $\alpha \to 1$ of Theorem \ref{thm3.1}. The reason is that $f(z)$ is required to satisfy the condition $f(0)=0$ (see \cite{5} for details) for its derivative $f'(z)$ to have a Bohr phenomenon. This motivates the following:

\begin{theorem}
\label{thm3.2}
Let $f(z)\in \mathcal{B}$ and $f(z)=\sum_{n=0}^\infty a_n z^n$. Then
\begin{equation}
\label{eq3.3}
B[D^\alpha f] = \sum_{n=1}^\infty \frac{\Gamma(n+1)}{\Gamma(n+1-\alpha)} |a_n| r^{n-\alpha} \leq \frac{r^{1-\alpha}}{\Gamma(2-\alpha)}, \quad 0<\alpha<1, 
\end{equation}
for $r \leq \rho(\alpha)$, where $\rho(\alpha)$ is the minimum positive solution of
\[
\sum_{n=2}^\infty \frac{\Gamma(n+1)}{\Gamma(n+1-\alpha)} r^n = \frac{r}{2\Gamma(2-\alpha)}.
\]
\end{theorem}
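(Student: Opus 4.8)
The plan is to follow the template of the proof of Theorem~\ref{thm3.1}, the one new ingredient being a sharper coefficient bound. Since the majorant in \eqref{eq3.3} starts at the index $n=1$ and, as pointed out just before the statement, the relevant normalization here is $f(0)=0$, Schwarz's lemma applies: $g(z):=f(z)/z=a_1+a_2z+a_3z^2+\cdots$ again lies in $\mathcal{B}$. Feeding $g$ into the same estimate used for Theorem~\ref{thm3.1} (namely $|b_n|\le 1-|b_0|^2$ for any $\sum b_nz^n\in\mathcal{B}$) gives $|a_1|\le 1$ together with $|a_n|\le 1-|a_1|^2$ for every $n\ge 2$.

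Next I would substitute these into \eqref{eq3.3}, isolate the $n=1$ term, and set $t:=|a_1|\in[0,1]$. Using $\Gamma(2)=1$ this yields
\[
B[D^\alpha f]\ \le\ Q(t)\ :=\ \frac{t\,r^{1-\alpha}}{\Gamma(2-\alpha)}+(1-t^2)\sum_{n=2}^{\infty}\frac{\Gamma(n+1)}{\Gamma(n+1-\alpha)}\,r^{n-\alpha}.
\]
Exactly as in the earlier proof, $Q''(t)=-2\sum_{n\ge 2}\frac{\Gamma(n+1)}{\Gamma(n+1-\alpha)}r^{n-\alpha}\le 0$, so $Q$ is concave on $[0,1]$ and $Q'(t)\ge Q'(1)$ there. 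The inequality $Q'(1)\ge 0$ reads
\[
\frac{r^{1-\alpha}}{\Gamma(2-\alpha)}\ \ge\ 2\sum_{n=2}^{\infty}\frac{\Gamma(n+1)}{\Gamma(n+1-\alpha)}\,r^{n-\alpha},
\]
which, after multiplying through by $r^{\alpha}/2$, is precisely $\sum_{n\ge 2}\frac{\Gamma(n+1)}{\Gamma(n+1-\alpha)}r^{n}\le \frac{r}{2\Gamma(2-\alpha)}$. Hence for $r\le\rho(\alpha)$ the function $Q$ is nondecreasing on $[0,1]$, and therefore $B[D^\alpha f]\le Q(t)\le Q(1)=\dfrac{r^{1-\alpha}}{\Gamma(2-\alpha)}$, which is \eqref{eq3.3}.

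To finish I would note that $\rho(\alpha)$ is well defined: writing $\varphi(r)=\sum_{n\ge 2}\frac{\Gamma(n+1)}{\Gamma(n+1-\alpha)}r^{n}-\frac{r}{2\Gamma(2-\alpha)}$, the series converges on $(0,1)$ since $\Gamma(n+1)/\Gamma(n+1-\alpha)\sim n^{\alpha}$; one has $\varphi(r)<0$ for $r$ near $0$ (the series vanishes to order $r^{2}$) while $\varphi(r)\to+\infty$ as $r\to 1^{-}$ because $\sum_{n\ge 2}\Gamma(n+1)/\Gamma(n+1-\alpha)$ diverges (its terms tend to infinity); hence $\varphi$ has a least positive zero, taken as $\rho(\alpha)$. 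As a consistency check one sees that the defining equation reduces to $r/(1-r)=1/2$ as $\alpha\to 0$, recovering $\rho(0)=1/3$, and that it still has a positive root as $\alpha\to 1$ (unlike the situation in Theorem~\ref{thm3.1}), in line with the $f(0)=0$ normalization.

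The only step needing genuine care is the first: the bound $|a_n|\le 1-|a_1|^2$ for $n\ge 2$ holds \emph{only} because $f(0)=0$ permits passing to $f(z)/z\in\mathcal{B}$ — the very hypothesis whose absence caused the Bohr radius to disappear in the $\alpha\to 1$ limit of Theorem~\ref{thm3.1}. Once that estimate is in hand, the concavity of $Q$, the reduction to the stated transcendental equation, and the monotonicity conclusion are a routine transcription of the argument already given for Theorem~\ref{thm3.1}.
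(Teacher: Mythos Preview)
Your proposal is correct and follows essentially the same route as the paper's own proof: both isolate the $n=1$ term, invoke the coefficient bound $|a_n|\le 1-|a_1|^2$ for $n\ge 2$, and use concavity of the resulting one-variable function in $|a_1|$ to reduce the inequality to the defining equation for $\rho(\alpha)$. Your version is slightly more explicit in justifying that coefficient bound via Schwarz's lemma and $f(z)/z\in\mathcal{B}$ (the paper simply cites \cite{5}), and you add a useful existence argument for $\rho(\alpha)$ that the paper omits.
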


\begin{proof}
Let $|a_1|=a<1$. Since $f\in \mathcal{B}$, then $|a_n|\le 1-a^2$, $n\ge2$ (see \cite{5}). It follows that
\[ 
\begin{split}
B[D^\alpha f] &= \sum_{n=1}^\infty |a_n|\frac{\Gamma(n+1)}{\Gamma(n+1-\alpha)} r^{n-\alpha} = |a_1| \frac{r^{1-\alpha}}{\Gamma(2-\alpha)} + \sum_{n=2}^\infty |a_n|\frac{\Gamma(n+1)}{\Gamma(n+1-\alpha)} r^{n-\alpha}\\
& \le a\frac{r^{1-\alpha}}{\Gamma(2-\alpha)} + (1-a^2)\sum_{n=2}^\infty \frac{\Gamma(n+1)}{\Gamma(n+1-\alpha)} r^{n-\alpha}.
\end{split}
\]
Setting
\begin{equation}
\label{eq3.4}
B[D^\alpha f] \le \phi(a)= a\frac{r^{1-\alpha}}{\Gamma(2-\alpha)} + (1-a^2)\sum_{n=2}^\infty \frac{\Gamma(n+1)}{\Gamma(n+1-\alpha)} r^{n-\alpha}.
\end{equation}
Differentiation of the function $\phi(a)$ w.r.t. $a$ gives
\[ 
\phi'(a)= \frac{r^{1-\alpha}}{\Gamma(2-\alpha)} -2a\sum_{n=2}^\infty \frac{\Gamma(n+1)}{\Gamma(n+1-\alpha)} r^{n-\alpha} 
\]
and
\[ 
\phi''(a)= -2\sum_{n=2}^\infty \frac{\Gamma(n+1)}{\Gamma(n+1-\alpha)} r^{n-\alpha} \le 0,
\]
for all $a \in [0, 1)$ and $r\in (0, 1)$.  Since $a<1$, it implies that $\phi'(a)\ge \phi'(1)$. Thus,
\[ 
\phi'(a) \ge  \frac{r^{1-\alpha}}{\Gamma(2-\alpha)} -2\sum_{n=2}^\infty \frac{\Gamma(n+1)}{\Gamma(n+1-\alpha)} r^{n-\alpha} \ge 0.\]
Solving for $r\in (0,1)$, we get $r\le \rho(\alpha)$, where $\rho(\alpha)$ is the minimum positive root of
\[
\frac{r^{1-\alpha}}{\Gamma(2-\alpha)} = 2\sum_{n=2}^\infty \frac{\Gamma(n+1)}{\Gamma(n+1-\alpha)} r^{n-\alpha} .
\]
Thus,  $\phi(a)$ is an increasing function of $a$, for $r\le \rho(\alpha)$. It follows that for all $a \in [0, 1)$, $\phi(a)\le \phi(1)$. That is,
\[
B[D^\alpha f] \le \phi(a)\le \phi(1)= \frac{r^{1-\alpha}}{\Gamma(2-\alpha)},
\]
for $r\le \rho(\alpha)$. This completes the proof of Theorem \ref{thm3.2}.
\end{proof}

\begin{remark}
The equality in Theorem \ref{thm3.2} is attained for the function \[f(z)= z \dfrac{a-z}{1-az}, \qquad a\in[0,1).\] 
\end{remark}

\noindent Table \ref{tab2} reveals values of the Bohr radii $\rho(\alpha)$ for some $\alpha \in [0,1)$.

\begin{table}[H]
\centering
\begin{tabular}{|c|c|c|c|c|c|}
\hline
{\bf $\alpha$} &0&0.2&0.5&0.8&1 \\
\hline
{\bf $\rho(\alpha)$} &0.33333&0.30431&0.26004&0.21462&0.18350\\
\hline
\end{tabular}
\caption{Computed values of Bohr radii $\rho(\alpha)$ for some $\alpha$-values.}
\label{tab2}
\end{table}

\begin{remark}
From Table \ref{tab2} we recover the classical Bohr radius at $\alpha=0$. Moreover, the Bohr radius $\rho(\alpha)$ lies between the extremal values at the endpoints of the parameter range:
\[
1/3 \;\le\; \rho(\alpha) \;\le\; 1-\sqrt{2/3}.\]
\end{remark}

\begin{corollary}
\label{cor3.3}
If $f(z) \in \mathcal{B}$ and  $f(z)=\sum_{n=1}^\infty a_n z^n$. Then
\begin{equation}
\label{eq3.5}
B[Df] = \sum_{n=1}^\infty |a_n| nr^{n-1} \leq 1, \quad for \quad r \leq 1- \sqrt{\frac{2}{3}},
\end{equation}
where the constant $1-\sqrt{2/3}$ is best possible.
\end{corollary}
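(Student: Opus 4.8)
This statement is precisely the endpoint $\alpha\to 1$ of Theorem~\ref{thm3.2}, and the reason it merits a separate proof is that at $\alpha=1$ one has $\Gamma(n+1)/\Gamma(n)=n$ and $r^{1-\alpha}/\Gamma(2-\alpha)\to 1$, so every series occurring in the argument collapses to an elementary power series and the implicitly defined radius $\rho(\alpha)$ can be evaluated in closed form. The plan is therefore to rerun the proof of Theorem~\ref{thm3.2} with $\alpha=1$, obtaining the explicit value $1-\sqrt{2/3}$, and then to establish sharpness by exhibiting an explicit extremal function.

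For the upper bound I would set $|a_1|=a\in[0,1)$; since $f\in\mathcal B$ with $a_0=0$, the estimate $|a_n|\le 1-a^2$ for $n\ge 2$ (the same bound from \cite{5} used in Theorem~\ref{thm3.2}) gives
\[
B[Df]=\sum_{n=1}^\infty n|a_n|r^{n-1}\le a+(1-a^2)\sum_{n=2}^\infty n r^{n-1}.
\]
Using $\sum_{n=2}^\infty n r^{n-1}=\dfrac{1}{(1-r)^2}-1$, set $Q(a)=a+(1-a^2)\big(\tfrac{1}{(1-r)^2}-1\big)$. Then $Q'(a)=1-2a\big(\tfrac{1}{(1-r)^2}-1\big)\ge Q'(1)=1-2\big(\tfrac{1}{(1-r)^2}-1\big)$, and $Q'(1)\ge 0$ is equivalent to $(1-r)^2\ge \tfrac23$, i.e.\ $r\le 1-\sqrt{2/3}$. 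For such $r$ the quadratic $Q$ is increasing on $[0,1)$, whence $B[Df]\le Q(a)\le Q(1)=1$, which is \eqref{eq3.5}.

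For sharpness I would use the extremal function already indicated after Theorem~\ref{thm3.2}, namely $f_a(z)=z\,\dfrac{a-z}{1-az}$ with $a\in[0,1)$, whose Taylor coefficients are $a_1=a$ and $a_n=-(1-a^2)a^{n-2}$ for $n\ge2$. A short computation using $\sum_{n\ge2}n x^{n-2}=\dfrac{2-x}{(1-x)^2}$ gives $B[Df_a]=a+(1-a^2)\,r\,\dfrac{2-ar}{(1-ar)^2}$. Substituting $a=1-\varepsilon$ and expanding to first order in $\varepsilon$ yields $B[Df_a]=1+\varepsilon\Big(\dfrac{2r(2-r)}{(1-r)^2}-1\Big)+O(\varepsilon^2)$, and since $\dfrac{2r(2-r)}{(1-r)^2}-1=\dfrac{-3r^2+6r-1}{(1-r)^2}$ is strictly positive exactly for $r>1-\sqrt{2/3}$, the bound $B[Df_a]\le 1$ is violated for all $a$ sufficiently close to $1$ once $r$ exceeds $1-\sqrt{2/3}$; hence the constant is best possible. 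The only subtle point is this last step: at $a=1$ the function $f_a$ degenerates and $B[Df_a]\to 1$ for every admissible $r$, so sharpness cannot be read off from a single function but must be detected through the sign of the first-order term in the expansion about $a=1$; everything else is a direct specialization of Theorem~\ref{thm3.2}.
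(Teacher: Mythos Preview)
Your proof is correct and follows essentially the same approach as the paper: the paper's argument for Corollary~\ref{cor3.3} is a single sentence, ``By letting $\alpha\to 1$ in Theorem~\ref{thm3.2}, we obtain the desired result,'' together with a citation of \cite{5} for the original statement. You carry out exactly this specialization explicitly, collapsing the gamma factors to $n$ and solving $(1-r)^2\ge 2/3$ in closed form, and you add a self-contained sharpness argument via the family $f_a(z)=z(a-z)/(1-az)$ and a first-order expansion about $a=1$, whereas the paper defers sharpness to \cite{5}.
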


\begin{proof}
By letting $\alpha \to 1$ in Theorem \ref{thm3.2}, we obtain the desired result. In particular, Corollary \ref{cor3.3} recovers Theorem~1 of \cite{5}.
\end{proof}

\noindent It is known from the literature that when the term \( |a_0| \) in Theorem \ref{thm1.1} is replaced by \( |a_0|^2 \), the corresponding Bohr radius reduces to \( 1/2 \) (see \cite{2}). In the following theorem, we extend this result to the setting of fractional derivatives.

\begin{theorem}
\label{thm3.4}
Let $f(z)=\sum_{n=0}^\infty a_n z^n$ with $f(z)\in \mathcal{B}$. Then for $0<\alpha<1$
\begin{equation}
\label{eq3.6}
B[D^\alpha f] =|a_0|^2 \frac{r^{-2\alpha}}{\Gamma^2(1-\alpha)} +  \sum_{n=1}^\infty \frac{\Gamma(n+1)}{\Gamma(n+1-\alpha)} |a_n| r^{n-\alpha} \leq \frac{r^{-2\alpha}}{\Gamma^2(1-\alpha)},
\end{equation}
for $r \leq N(\alpha),$ where $N(\alpha)$ is the minimum positive solution of
\[
\sum_{n=1}^\infty \frac{\Gamma(n+1)}{\Gamma(n+1-\alpha)} r^{n+\alpha} = \frac{1}{\Gamma^2(1-\alpha)}.
\]
\end{theorem}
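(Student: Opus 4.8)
The plan is to follow the template established in the proofs of Theorems \ref{thm3.1} and \ref{thm3.2}, adapted to the situation where the leading coefficient enters quadratically. Write $|a_0| = a \in [0,1)$ and, as before, invoke the Schur--Pick-type bound $|a_n| \le 1 - a^2$ for $n \ge 1$ valid for $f \in \mathcal{B}$. Substituting into the left-hand side of \eqref{eq3.6} yields
\[
B[D^\alpha f] \;\le\; \Psi(a) \;=\; a^2 \frac{r^{-2\alpha}}{\Gamma^2(1-\alpha)} + (1-a^2)\sum_{n=1}^\infty \frac{\Gamma(n+1)}{\Gamma(n+1-\alpha)} r^{n-\alpha},
\]
and the goal becomes showing $\Psi(a) \le \Psi(1) = r^{-2\alpha}/\Gamma^2(1-\alpha)$ for all $a \in [0,1)$ whenever $r \le N(\alpha)$.

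The key observation is that $\Psi(a)$ is now a \emph{quadratic} polynomial in $a$ rather than having an interior critical point structure; indeed $\Psi(a) = \Psi(1) - (1-a^2)\left(\dfrac{r^{-2\alpha}}{\Gamma^2(1-\alpha)} - \sum_{n=1}^\infty \dfrac{\Gamma(n+1)}{\Gamma(n+1-\alpha)} r^{n-\alpha}\right)$. Since $1 - a^2 \ge 0$ on $[0,1)$, the inequality $\Psi(a) \le \Psi(1)$ holds precisely when the bracketed quantity is nonnegative, i.e.\ when
\[
\frac{r^{-2\alpha}}{\Gamma^2(1-\alpha)} \;\ge\; \sum_{n=1}^\infty \frac{\Gamma(n+1)}{\Gamma(n+1-\alpha)} r^{n-\alpha}.
\]
Multiplying through by $r^{\alpha}$ recasts this as $\dfrac{r^{-\alpha}}{\Gamma^2(1-\alpha)} \ge \sum_{n=1}^\infty \dfrac{\Gamma(n+1)}{\Gamma(n+1-\alpha)} r^{n}$, equivalently $\sum_{n=1}^\infty \dfrac{\Gamma(n+1)}{\Gamma(n+1-\alpha)} r^{n+\alpha} \le \dfrac{1}{\Gamma^2(1-\alpha)}$, which is exactly the defining relation for $N(\alpha)$. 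One should check that the left-hand side of this last inequality is a strictly increasing function of $r$ on $(0,1)$ that vanishes at $r = 0$ (each term $r^{n+\alpha}$ is increasing with positive coefficient), so that the inequality is equivalent to $r \le N(\alpha)$ and $N(\alpha)$ is well-defined as the minimum positive solution.

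The main obstacle here is essentially bookkeeping rather than a genuine difficulty: one must verify that the series $\sum_{n\ge1} \frac{\Gamma(n+1)}{\Gamma(n+1-\alpha)} r^{n-\alpha}$ converges for $r \in (0,1)$ (it does, since $\Gamma(n+1)/\Gamma(n+1-\alpha) \sim n^{\alpha}$ as $n \to \infty$, so the terms behave like $n^{\alpha} r^{n-\alpha}$ and the root test gives radius of convergence $1$) and to justify the termwise manipulations and the monotonicity claim used to characterize $N(\alpha)$. Unlike the proofs of Theorems \ref{thm3.1} and \ref{thm3.2}, no second-derivative argument is needed because the quadratic in $a$ opens downward and its value at the endpoint $a=1$ dominates automatically once the coefficient comparison holds; this is the structural reason the replacement $|a_0| \mapsto |a_0|^2$ enlarges the Bohr radius. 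I would close by noting the sharpness via the extremal function $f(z) = (a-z)/(1-az)$, for which $|a_n| = 1 - a^2$ for $n \ge 1$ and equality propagates through every estimate, and remarking that letting $\alpha \to 0$ recovers the classical radius $1/2$ for the $|a_0|^2$-version of the Bohr inequality.
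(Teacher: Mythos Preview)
Your argument is correct and follows the paper's template up to the majorant $\Psi(a)=a^{2}C_{1}+(1-a^{2})C_{2}$ with $C_{1}=r^{-2\alpha}/\Gamma^{2}(1-\alpha)$ and $C_{2}=\sum_{n\ge 1}\frac{\Gamma(n+1)}{\Gamma(n+1-\alpha)}r^{n-\alpha}$, but then diverges in a useful way. The paper differentiates $\Psi$ twice and argues via concavity that $\Psi'(a)\ge\Psi'(1)\ge 0$; however, here $\Psi''(a)=2(C_{1}-C_{2})=\Psi'(1)$, so asking simultaneously for $\Psi''\le 0$ and $\Psi'(1)\ge 0$ is self-defeating, and the paper's restriction to $r\in[1/2,1)$ sits on the wrong side of $N(\alpha)$. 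Your direct factorization $\Psi(a)-\Psi(1)=-(1-a^{2})(C_{1}-C_{2})$ bypasses this entirely and makes the equivalence with the defining equation for $N(\alpha)$ transparent; together with the monotonicity of $r\mapsto\sum_{n\ge1}\frac{\Gamma(n+1)}{\Gamma(n+1-\alpha)}r^{n+\alpha}$ it gives a clean proof. One verbal slip: when $C_{1}>C_{2}$ the parabola in $a$ opens \emph{upward}, not downward---the endpoint $a=1$ dominates on $[0,1]$ because $\Psi$ is increasing there, not because of concavity---but your algebraic identity already delivers the conclusion independently of that remark. The sharpness comment is slightly loose (for the M\"obius map $|a_{n}|=(1-a^{2})a^{n-1}$, so equality is only attained in the limit $a\to 1^{-}$), though the paper does not address sharpness for this theorem at all.
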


\begin{proof}
Let $|a_0|=a$ and $f\in \mathcal{B}$. Then $|a_n|\le 1-a^2$, $n\ge1$. Therefore,
\begin{equation}
\label{eq3.7}
 \begin{split}
B[D^\alpha f] & = |a_0|^2 \frac{r^{-2\alpha}}{\Gamma^2(1-\alpha)} + \sum_{n=1}^\infty |a_n|\frac{\Gamma(n+1)}{\Gamma(n+1-\alpha)} r^{n-\alpha}\\
& \le a^2\frac{r^{-2\alpha}}{\Gamma^2(1-\alpha)} + (1-a^2)\sum_{n=1}^\infty \frac{\Gamma(n+1)}{\Gamma(n+1-\alpha)} r^{n-\alpha}.
\end{split}
\end{equation}
Set
\[\phi(a)=a^2\frac{r^{-2\alpha}}{\Gamma^2(1-\alpha)} + (1-a^2)\sum_{n=1}^\infty \frac{\Gamma(n+1)}{\Gamma(n+1-\alpha)} r^{n-\alpha}. \]
Differentiating $\phi(a)$ twice w.r.t. $a$ to have
\[ 
\phi'(a)= 2a\frac{r^{-2\alpha}}{\Gamma^2(1-\alpha)} - 2a\sum_{n=1}^\infty \frac{\Gamma(n+1)}{\Gamma(n+1-\alpha)} r^{n-\alpha} 
\]
and 
\[ 
\phi''(a)= 2\frac{r^{-2\alpha}}{\Gamma^2(1-\alpha)} - 2\sum_{n=1}^\infty \frac{\Gamma(n+1)}{\Gamma(n+1-\alpha)} r^{n-\alpha}.
\]
After few computations, we find that $\phi''(a)\le0$ for all $\alpha \in (0, 1)$ and $r\in [1/2, 1)$. Thus, for every fixed $\alpha \in (0, 1)$ and $r\in [1/2, 1)$,
$$\phi'(a)\ge \phi'(1), \quad \text{for all }\, a<1,$$ 
which implies that
\[ 
\phi'(a)\ge 2\frac{r^{-2\alpha}}{\Gamma^2(1-\alpha)} - 2\sum_{n=1}^\infty \frac{\Gamma(n+1)}{\Gamma(n+1-\alpha)} r^{n-\alpha} \ge0.
\]
This holds for $r\in [1/2,1)$ only if $r\le N(\alpha)$, where $N(\alpha)$ is the minimum positive root of
\[
\frac{r^{-2\alpha}}{\Gamma^2(1-\alpha)} =\sum_{n=1}^\infty \frac{\Gamma(n+1)}{\Gamma(n+1-\alpha)} r^{n-\alpha}.
\]
Again, $\phi(a)$ is an increasing function of $a$. Hence, for $r\le N(\alpha)$, $\phi(a)\le \phi(1)$. So that \eqref{eq3.7} yields
\[
B[D^\alpha f] \le \phi(a)\le \phi(1)= \frac{r^{-2\alpha}}{\Gamma^(1-\alpha)},
\]
for $r\le R(\alpha)$. This completes the proof of Theorem \ref{thm3.4}.
\end{proof}

\begin{table}[H]
\centering
\begin{tabular}{|c|c|c|c|c|c|c|}
\hline
{\bf $\alpha$} &0&0.1&0.2&0.5&0.8&0.9 \\
\hline
{\bf $N(\alpha)$} &0.50000&0.467028&0.431574&0.308621&0.150656&0.083639\\
\hline
\end{tabular}
\caption{Numerical values of $N(\alpha)$ for some selected values of $\alpha\in[0,1)$.}
\label{tab:N-alpha}
\end{table}

Table~\ref{tab:N-alpha} illustrates the behaviour of the Bohr radius $N(\alpha)$. The values were obtained numerically by using the python (google collab). In accordance with the discussion in the text, $N(\alpha)$ is strictly decreasing on $(0,1)$.  In particular, for large fractional orders the admissible radius is very small (e.g.\ $N(0.99)\approx 9.7\times10^{-3}$).

\begin{corollary}
\label{cor3.5}
If $f(z) \in \mathcal{B}$ and $f(z)=\sum_{n=1}^\infty a_n z^n$. Then
\begin{equation}
\label{eq3.8}
|a_0|^2 + \sum_{n=1}^\infty |a_n| r^n \leq 1, \quad for \quad r \leq \frac{1}{2},
\end{equation}
where the constant $1/2$ is best possible.
\end{corollary}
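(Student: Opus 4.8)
The plan is to derive Corollary~\ref{cor3.5} from Theorem~\ref{thm3.4} by taking the limiting value $\alpha \to 0$. First I would record that $\Gamma(1-\alpha) \to \Gamma(1) = 1$ as $\alpha \to 0$, so that the prefactor $r^{-2\alpha}/\Gamma^2(1-\alpha) \to 1$, while each coefficient weight satisfies $\Gamma(n+1)/\Gamma(n+1-\alpha) \to 1$ and $r^{n-\alpha} \to r^n$ termwise. Hence the majorant series in \eqref{eq3.6} tends to $|a_0|^2 + \sum_{n=1}^\infty |a_n| r^n$ and the bound on the right tends to $1$, yielding inequality \eqref{eq3.8}. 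The admissible range $r \le N(\alpha)$ then passes to $r \le N(0)$, where $N(0)$ is the minimum positive root of the limiting equation $\sum_{n=1}^\infty r^n = 1$, i.e.\ $r/(1-r) = 1$, giving $N(0) = 1/2$. This matches the known result that replacing $|a_0|$ by $|a_0|^2$ in Theorem~\ref{thm1.1} improves the Bohr radius to $1/2$, which is precisely the claim.

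For the self-contained verification (should one prefer not to invoke the limit), I would instead repeat the argument of Theorem~\ref{thm3.4} with $\alpha = 0$ directly: writing $a = |a_0|$ and using $|a_n| \le 1 - a^2$ for $n \ge 1$, one bounds $|a_0|^2 + \sum |a_n| r^n \le \psi(a) := a^2 + (1-a^2)\sum_{n\ge1} r^n = a^2 + (1-a^2)\tfrac{r}{1-r}$. The derivative $\psi'(a) = 2a\bigl(1 - \tfrac{r}{1-r}\bigr)$ is nonnegative on $[0,1)$ exactly when $\tfrac{r}{1-r} \le 1$, i.e.\ $r \le 1/2$; in that range $\psi$ is increasing, so $\psi(a) \le \psi(1) = 1$, giving \eqref{eq3.8}.

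For sharpness of the constant $1/2$, I would exhibit the extremal function $f(z) = \dfrac{a - z}{1 - a z}$ with $a \in [0,1)$, which lies in $\mathcal{B}$ and has $a_0 = a$, $|a_n| = (1-a^2) a^{n-1}$ for $n \ge 1$. For this $f$ the left side of \eqref{eq3.8} equals $a^2 + (1-a^2)\sum_{n\ge1} a^{n-1} r^n = a^2 + (1-a^2)\dfrac{r}{1 - a r}$. As $a \to 1^-$ this tends to $1 + 0 \cdot \dfrac{r}{1-r}$; a more careful expansion — setting $a = 1 - \varepsilon$ and letting $\varepsilon \to 0^+$ — shows the quantity exceeds $1$ for any fixed $r > 1/2$, confirming that $1/2$ cannot be enlarged.

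The main obstacle is not analytic difficulty but bookkeeping: one must justify interchanging the limit $\alpha \to 0$ with the infinite sum in \eqref{eq3.6}. Since the excerpt treats the analogous passages $\alpha \to 0$ (Remark after Theorem~\ref{thm3.1}) and $\alpha \to 1$ (Corollary~\ref{cor3.3}) as routine, I would likewise be brief, noting that for $r$ in a compact subinterval of $(0,1)$ the terms are dominated uniformly in $\alpha$ near $0$ by a convergent geometric-type series, so dominated convergence applies and the limit of the roots $N(\alpha)$ is the root $N(0) = 1/2$ of the limiting equation. With that remark in place the corollary follows immediately, so I expect the whole proof to occupy only a few lines — essentially ``let $\alpha \to 0$ in Theorem~\ref{thm3.4}, observe $N(0) = 1/2$, and check sharpness via $f(z) = (a-z)/(1-az)$.''
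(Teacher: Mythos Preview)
Your proposal is correct and follows exactly the paper's approach: the paper's proof of Corollary~\ref{cor3.5} consists solely of the sentence ``The proof of Corollary~\ref{cor3.5} follows by allowing $\alpha \to 0$ in Theorem~\ref{thm3.4},'' together with a citation noting that this recaptures Corollary~3 of \cite{12}. Your additional self-contained verification, explicit sharpness check via $f(z)=(a-z)/(1-az)$, and dominated-convergence remark are all correct and more thorough than what the paper provides, but they do not constitute a different method.
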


\begin{proof}
The proof of corollary \ref{cor3.5} follows by allowing $\alpha \to 0$ in Theorem \ref{thm3.4}. Furthermore, corollary \ref{cor3.5} recapture exactly the result stated as corollary~3 of \cite{12}.
\end{proof}

\subsection{Bohr radius for Fractional Derivative of Univalent Functions}
\label{sec3.2}

\noindent In this section, we establish Bohr-type inequalities for fractionally differentiated univalent and convex functions. Before stating the theorems, the following lemma is necessary.

\begin{lemma}
\label{lem3.9}
Let $a$ and $b$ be positive integers and $ 0<\alpha<1$. Then
\begin{equation}
\label{eq3.9}
D^\alpha \frac{z^a}{(1+z)^b} = \frac{z^{a-\alpha} \Gamma(a+1)}{\Gamma(a+1-\alpha)}\, {_2F_1}(b,  \,a+1; \, a+1-\alpha; \, -z).
\end{equation}
\end{lemma}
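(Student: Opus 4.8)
The plan is to expand $z^a/(1+z)^b$ as a power series in $z$, apply the termwise action of $D^\alpha$ given by \eqref{eq2.2}, and recognize the resulting series as a Gauss hypergeometric function ${}_2F_1$. First I would write the binomial series
\[
\frac{1}{(1+z)^b} = \sum_{k=0}^\infty \binom{-b}{k} z^k = \sum_{k=0}^\infty (-1)^k \frac{(b)_k}{k!}\, z^k,
\]
where $(b)_k=\Gamma(b+k)/\Gamma(b)$ is the Pochhammer symbol, so that $z^a/(1+z)^b = \sum_{k=0}^\infty (-1)^k \frac{(b)_k}{k!} z^{a+k}$. Applying \eqref{eq2.2} term by term (the series converges locally uniformly in $\mathbb{D}$, which justifies the interchange of $D^\alpha$ with the summation), each monomial $z^{a+k}$ is sent to $\frac{\Gamma(a+k+1)}{\Gamma(a+k+1-\alpha)} z^{a+k-\alpha}$. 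This yields
\[
D^\alpha \frac{z^a}{(1+z)^b} = z^{a-\alpha}\sum_{k=0}^\infty (-1)^k \frac{(b)_k}{k!}\, \frac{\Gamma(a+k+1)}{\Gamma(a+k+1-\alpha)}\, z^{k}.
\]

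Next I would factor out the $k=0$ term, namely $\frac{\Gamma(a+1)}{\Gamma(a+1-\alpha)} z^{a-\alpha}$, and massage the remaining ratio into Pochhammer form. Using $\Gamma(a+k+1) = \Gamma(a+1)(a+1)_k$ and $\Gamma(a+k+1-\alpha) = \Gamma(a+1-\alpha)(a+1-\alpha)_k$, the general coefficient becomes
\[
\frac{\Gamma(a+1)}{\Gamma(a+1-\alpha)} \cdot \frac{(b)_k\,(a+1)_k}{(a+1-\alpha)_k}\cdot\frac{(-z)^k}{k!},
\]
and summing over $k\ge 0$ produces exactly $\frac{\Gamma(a+1)}{\Gamma(a+1-\alpha)} z^{a-\alpha}\,{}_2F_1(b,a+1;a+1-\alpha;-z)$, which is the claimed identity \eqref{eq3.9}.

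The only genuine subtlety — and the step I would single out as needing a sentence of care — is the justification that $D^\alpha$ commutes with the infinite sum. Since $D^\alpha$ is defined in \eqref{eq2.2} via an integral followed by a derivative, one argues that for $|z|\le r_0<1$ the binomial series converges uniformly, hence may be integrated term by term against $(z-\zeta)^{-\alpha}$, and the resulting series of analytic functions converges locally uniformly (its coefficients grow only polynomially times $(b)_k/k! \sim k^{b-1}$, so the radius of convergence is still $1$), permitting differentiation term by term as well. Everything else is bookkeeping with Gamma functions and the definition of ${}_2F_1(\beta_1,\beta_2;\gamma;w)=\sum_{k\ge0}\frac{(\beta_1)_k(\beta_2)_k}{(\gamma)_k}\frac{w^k}{k!}$; note also that $a+1-\alpha$ is never a non-positive integer for positive integer $a$ and $\alpha\in(0,1)$, so the hypergeometric series is well defined.
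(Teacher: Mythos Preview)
Your proof is correct and follows essentially the same approach as the paper's: expand $(1+z)^{-b}$ by the binomial series, compute the fractional derivative term by term, rewrite the Gamma ratios as Pochhammer symbols, and identify the resulting series as ${}_2F_1(b,a+1;a+1-\alpha;-z)$. The only minor difference is that the paper carries out the termwise computation via the integral definition (substitution $t=zu$ and the Beta integral $\int_0^1 u^{a+n}(1-u)^{-\alpha}\,du$) rather than invoking the series formula in \eqref{eq2.2} directly; your route is a bit shorter since \eqref{eq2.2} already records the action of $D^\alpha$ on monomials.
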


\begin{proof}
Let $G=z^a/(1+z)^b$. From \eqref{eq2.2}, we have
\[
D^\alpha G =  \frac{1}{\Gamma(1-\alpha)} \frac{d}{dz} \int_{0}^{z} \frac{t^a}{(1+t)^b(z-t)^\alpha} dt.
\]
With the transformation $t=zu$, we obtain
\[
D^\alpha G =  \frac{1}{\Gamma(1-\alpha)} \frac{d}{dz} z^{a+1-\alpha} \int_{0}^{1} \frac{u^a}{(1+zu)^b(1-u)^\alpha} du
\]
Using the negative binomial formula for $(1+zu)^{-b}$, we find that 
\[\begin{split}
D^\alpha G &=  \frac{1}{\Gamma(1-\alpha)} \frac{d}{dz} z^{a+1-\alpha}  \sum_{n=0}^{\infty} \binom{b-1+n}{n}(-z)^n   \int_{0}^{1} {u^{a+n}}(1-u)^{-\alpha} du\\
& =  \frac{1}{\Gamma(1-\alpha)} \frac{d}{dz} z^{a+1-\alpha}  \sum_{n=0}^{\infty} \binom{b-1+n}{n}(-z)^n   \beta(a+1+n, \, 1-\alpha)\\
& = z^{a-\alpha}  \sum_{n=0}^{\infty} (b)_n \frac{\Gamma(a+1+n)}{\Gamma(a+1-\alpha+n)} \frac{(-z)^n}{n!} \\
& = \frac{z^{a-\alpha} \Gamma(a+1)}{\Gamma(a+1-\alpha)} \sum_{n=0}^{\infty} \frac{(b)_n(a+1)_n}{(a+1-\alpha)_n} \frac{(-z)^n}{n!}\\
& =  \frac{z^{a-\alpha} \Gamma(a+1)}{\Gamma(a+1-\alpha)}\,  {_2F_1}(b;  \,a+1; \, a+1-\alpha; \, -z).
\end{split}\]
This completes the proof.
\end{proof}

\begin{theorem}
\label{thm3.7}
Let $f(z)=z+\sum_{n=1}^\infty a_n z^n$ be a univalent function in $\mathbb{D}$. Then
\begin{equation}
\label{eq3.10}
B[D^\alpha f] = \sum_{n=1}^\infty \frac{\Gamma(n+1)}{\Gamma(n+1-\alpha)} |a_n| r^{n-\alpha} \leq \mathrm{dist}\big(D^\alpha f(0), \partial D^\alpha f(\mathbb{D}) \big),
\end{equation}
$0<\alpha<1$, for $r \leq K(\alpha)$, where $K(\alpha)$ is the minimum positive solution of
\begin{equation}
\label{eq3.11}
\sum_{n=1}^\infty  \frac{n\, \Gamma(n+1)}{\Gamma(n+1-\alpha)} r^{n-\alpha} = \lim_{|z|\to1}  \left| \frac{z^{1-\alpha}}{\Gamma(2-\alpha)} \: {_2F_1}(2;2;{2-\alpha}; -z) \right|.
\end{equation}
\end{theorem}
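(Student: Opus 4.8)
The plan is to bound $B[D^\alpha f]$ from above by the de~Branges coefficient estimate, to identify the right-hand side of \eqref{eq3.10} by computing the $D^\alpha$-image of the Koebe function via Lemma~\ref{lem3.9}, and then to define $K(\alpha)$ as precisely the radius at which these two quantities coincide. Since $f(z)=z+\sum_{n\ge 2}a_nz^n$ is univalent, de~Branges' theorem gives $|a_n|\le n$ for every $n\ge 1$ (with $a_1=1$), and all of these estimates hold with equality simultaneously exactly for rotations of the Koebe function. Substituting $|a_n|\le n$ into \eqref{eq3.10},
\[
B[D^\alpha f]=\sum_{n=1}^\infty\frac{\Gamma(n+1)}{\Gamma(n+1-\alpha)}|a_n|\,r^{n-\alpha}\ \le\ \sum_{n=1}^\infty\frac{n\,\Gamma(n+1)}{\Gamma(n+1-\alpha)}\,r^{n-\alpha}=:S_\alpha(r);
\]
since each exponent $n-\alpha$ and each coefficient is positive, $S_\alpha$ is continuous and strictly increasing on $(0,1)$ with $S_\alpha(0^+)=0$, and a hypergeometric reduction like that in Lemma~\ref{lem3.9} gives $S_\alpha(r)=\frac{r^{1-\alpha}}{\Gamma(2-\alpha)}\,{_2F_1}(2,2;2-\alpha;r)$, so $S_\alpha(1^-)=+\infty$.

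Next I would pin down the distance appearing in \eqref{eq3.10}. Since $0<\alpha<1$, the lowest-order term of $D^\alpha f$ is a constant multiple of $z^{1-\alpha}$, hence $D^\alpha f(0)=0$ and the quantity to be bounded is $\mathrm{dist}\big(0,\partial D^\alpha f(\mathbb D)\big)$. Applying Lemma~\ref{lem3.9} with $a=1$, $b=2$ to the rotated Koebe function $g(z)=z/(1+z)^2$ gives
\[
D^\alpha g(z)=\frac{z^{1-\alpha}}{\Gamma(2-\alpha)}\,{_2F_1}(2,2;2-\alpha;-z),
\]
and since $g(\mathbb D)=\mathbb C\setminus[1/4,\infty)$ has nearest boundary point $g(1)=1/4$ to $g(0)=0$, the corresponding nearest boundary point of $D^\alpha g(\mathbb D)$ is reached in the limit $z\to1$, so that
\[
\mathrm{dist}\big(D^\alpha g(0),\partial D^\alpha g(\mathbb D)\big)=\lim_{|z|\to1}\left|\frac{z^{1-\alpha}}{\Gamma(2-\alpha)}\,{_2F_1}(2,2;2-\alpha;-z)\right|=:\mathcal D(\alpha).
\]
The extremal principle (the fractional counterpart of Koebe's one-quarter theorem) then yields $\mathrm{dist}\big(D^\alpha f(0),\partial D^\alpha f(\mathbb D)\big)\ge\mathcal D(\alpha)$ for every $f\in\mathcal S$, with equality for $g$.

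Monotonicity of $S_\alpha$ now furnishes a unique $K(\alpha)\in(0,1)$ with $S_\alpha\big(K(\alpha)\big)=\mathcal D(\alpha)$, which is exactly equation \eqref{eq3.11} (and in particular the minimum positive solution). For $r\le K(\alpha)$,
\[
B[D^\alpha f]\ \le\ S_\alpha(r)\ \le\ S_\alpha\big(K(\alpha)\big)\ =\ \mathcal D(\alpha)\ \le\ \mathrm{dist}\big(D^\alpha f(0),\partial D^\alpha f(\mathbb D)\big),
\]
which is \eqref{eq3.10}. Sharpness at $r=K(\alpha)$ follows by taking $f=g$, for which the de~Branges step and the distance step are both equalities, while for $r>K(\alpha)$ strict monotonicity of $S_\alpha$ gives $B[D^\alpha g]>\mathcal D(\alpha)$, so $K(\alpha)$ cannot be enlarged.

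The step I expect to be the main obstacle is the extremal principle just invoked: showing that the Koebe function minimizes $\mathrm{dist}\big(D^\alpha f(0),\partial D^\alpha f(\mathbb D)\big)$ over $f\in\mathcal S$, and that this minimum equals $\mathcal D(\alpha)$. For $\alpha=0$ this is Koebe's one-quarter theorem, but for $\alpha\in(0,1)$ the transform $D^\alpha f$ is in general not univalent and carries a branch point at the origin coming from the factor $z^{1-\alpha}$, so the classical covering argument does not apply; one must instead use distortion and growth estimates for $D^\alpha f$ (or a monotonicity argument along the radial slit, exploiting that the extremal coefficient configuration for $\mathcal S$ carries over to $D^\alpha f$) to establish that $\mathcal D(\alpha)$ is the exact lower bound. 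The remaining ingredients, namely the monotonicity of $S_\alpha$, the hypergeometric reduction of Lemma~\ref{lem3.9}, and the evaluation of the limit defining $\mathcal D(\alpha)$, are routine.
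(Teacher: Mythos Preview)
Your approach coincides with the paper's: de~Branges' bound $|a_n|\le n$ for the majorant series, and Lemma~\ref{lem3.9} applied to the Koebe function for the distance term. The step you flag as the main obstacle the paper dispatches simply by invoking the classical growth estimate of Theorem~\ref{thm2.1}(i), $|f(z)|\ge|z|/(1+|z|)^2$, and writing $\mathrm{dist}\bigl(D^\alpha f(0),\partial D^\alpha f(\mathbb D)\bigr)=\lim_{|z|\to1}|D^\alpha f(z)|\ge\lim_{|z|\to1}\bigl|D^\alpha\!\bigl(z/(1+z)^2\bigr)\bigr|$; no separate fractional extremal principle is developed there.
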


\begin{proof}
Let $f\in \mathcal{S}$. From the lower bound of Theorem \ref{thm2.1}(i), and applying \eqref{eq3.9}, we have
\begin{equation}
\label{eq3.12}
\begin{split}
\mathrm{dist}\big(D^\alpha f(0), \partial D^\alpha f(\mathbb{D}) \big) &= \lim_{|z|\to1} \big| \,|D^\alpha f(z)|  -| D^\alpha f(0) | \,\big| = \lim_{|z|\to1} \big|D^\alpha f(z)\big|  \\
&\ge  \lim_{|z|\to1}  \left| \frac{z^{1-\alpha}}{\Gamma(2-\alpha)} \: {_2F_1}(2;2;{2-\alpha}; -z) \right|,
\end{split}
\end{equation}
where $ {_2F_1}(2;2;{2-\alpha}; -z)$ is the hypergeometric function. By De Brange theorem $|a_n|\le n$, $n\ge 2$. Thus, taking the fractional derivative of $f$, we have
\[ 
\begin{split}
B[D^\alpha f] &= \sum_{n=1}^\infty |a_n|\frac{\Gamma(n+1)}{\Gamma(n+1-\alpha)} r^{n-\alpha} = \frac{r^{1-\alpha}}{\Gamma(2-\alpha)} + \sum_{n=2}^\infty |a_n|\frac{\Gamma(n+1)}{\Gamma(n+1-\alpha)} r^{n-\alpha}\\
& \le \frac{r^{1-\alpha}}{\Gamma(2-\alpha)} + \sum_{n=2}^\infty n \frac{\Gamma(n+1)}{\Gamma(n+1-\alpha)} r^{n-\alpha}.
\end{split}
\]
We seek the Bohr radius $r\in(0,1)$ such that $B[D^\alpha f] \le \mathrm{dist}\big(D^\alpha f(0), \partial D^\alpha f(\mathbb{D}) \big)$. Applying \eqref{eq3.12} to have
\[\sum_{n=1}^\infty  \frac{n\, \Gamma(n+1)}{\Gamma(n+1-\alpha)} r^{n-\alpha} \le \lim_{|z|\to1}  \left| \frac{z^{1-\alpha}}{\Gamma(2-\alpha)} \: {_2F_1}(2;2;{2-\alpha}; -z) \right|,\]
which holds in $(0,1)$ only if $r\le K(\alpha)$, where $K(\alpha)$ is given by \eqref{eq3.11}. This completes the proof of Theorem \ref{thm3.7}.
\end{proof}

\begin{remark}
Allowing $\alpha \to 0$, the Bohr radius becomes $r\le K(0)=3-2\sqrt{2}$, which corresponds to the Bohr radius in the classical case as stated in Theorem \ref{thm2.2}. The equality of Theorem \ref{thm3.7} is attained by considering the Koebe function $f(z)=z/(1-z)^2$.
\end{remark}

\noindent Table \ref{tab3} lists the values of the Bohr radii \(K(\alpha)\) for some \(\alpha \in [0,1)\).

\begin{table}[H]
\centering
\begin{tabular}{|c|c|c|c|c|c|c|}
\hline
{\bf $\alpha$} &0&0.1&0.2&0.3&0.4&0.5 \\
\hline
{\bf $K(\alpha)$} &0.171573&0.139068&0.106122&0.07377&0.0439472&0.019832\\
\hline
\end{tabular}
\caption{Computed values of Bohr radii $K(\alpha)$ for some $\alpha$-values.}
\label{tab3}
\end{table}

\begin{theorem}
\label{thm3.8}
Let $f(z)=z+\sum_{n=1}^\infty a_n z^n$ be a convex function in $\mathbb{D}$. Then for $0<\alpha<1$
\begin{equation}
\label{eq3.13}
B[D^\alpha f] = \sum_{n=1}^\infty \frac{\Gamma(n+1)}{\Gamma(n+1-\alpha)} |a_n| r^{n-\alpha} \leq \mathrm{dist}\big(D^\alpha f(0), \partial D^\alpha f(\mathbb{D}) \big),
\end{equation}
for $r \leq P(\alpha)$, where $P(\alpha)$ is the minimum positive solution of
\begin{equation}
\label{eq3.14}
\sum_{n=1}^\infty  \frac{\Gamma(n+1)}{\Gamma(n+1-\alpha)} r^{n-\alpha} = \lim_{|z|\to1}  \left| \frac{z^{1-\alpha}}{\Gamma(2-\alpha)} \: {_2F_1}(1;2;{2-\alpha}; -z) \right|.
\end{equation}
\end{theorem}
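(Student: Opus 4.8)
The plan is to mimic closely the argument of Theorem~\ref{thm3.7}, replacing the univalent growth estimate and the Bieberbach bound by their convex counterparts. First I would invoke Theorem~\ref{thm2.1}(ii): for $f\in\mathcal{C}$ the lower bound $|f(z)|\ge |z|/(1+|z|)$ holds, and since $|z|/(1+|z|)$ is exactly $z/(1+z)$ evaluated at real $z$, Lemma~\ref{lem3.9} with $a=b=1$ gives
\[
D^\alpha\!\left(\frac{z}{1+z}\right)=\frac{z^{1-\alpha}}{\Gamma(2-\alpha)}\,{}_2F_1(1;2;2-\alpha;-z).
\]
Because $D^\alpha f(0)=0$ (the series \eqref{eq2.4} for $\mathcal{C}$ starts at $n=1$, so $z^{-\alpha}$ has coefficient $a_0=0$), the distance from $D^\alpha f(0)$ to $\partial D^\alpha f(\mathbb{D})$ is again $\lim_{|z|\to1}|D^\alpha f(z)|$, and by monotonicity of the fractional derivative on the relevant extremal it is bounded below by $\lim_{|z|\to1}\bigl|\tfrac{z^{1-\alpha}}{\Gamma(2-\alpha)}\,{}_2F_1(1;2;2-\alpha;-z)\bigr|$, which is the right-hand side of \eqref{eq3.14}.

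Next I would bound $B[D^\alpha f]$ from above using the convex coefficient estimate $|a_n|\le 1$ for all $n\ge 1$ (recall $a_1=1$). This yields
\[
B[D^\alpha f]=\sum_{n=1}^\infty \frac{\Gamma(n+1)}{\Gamma(n+1-\alpha)}|a_n|\,r^{n-\alpha}\le \sum_{n=1}^\infty \frac{\Gamma(n+1)}{\Gamma(n+1-\alpha)}\,r^{n-\alpha}.
\]
Combining the two displays, the Bohr inequality \eqref{eq3.13} holds as soon as
\[
\sum_{n=1}^\infty \frac{\Gamma(n+1)}{\Gamma(n+1-\alpha)}\,r^{n-\alpha}\le \lim_{|z|\to1}\left|\frac{z^{1-\alpha}}{\Gamma(2-\alpha)}\,{}_2F_1(1;2;2-\alpha;-z)\right|.
\]
The left-hand side is an increasing function of $r$ on $(0,1)$ (all coefficients positive) while the right-hand side is a fixed constant, so the inequality defines an interval $(0,P(\alpha)]$ with $P(\alpha)$ the minimum positive root of \eqref{eq3.14}; existence of a positive root follows since the left side tends to $0$ as $r\to 0^+$. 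Finally I would note $\alpha\to 0$ recovers $P(0)=1/3$, matching the convex case of Theorem~\ref{thm2.2}, with extremal $f(z)=z/(1-z)$.

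The main obstacle is justifying the lower bound for the distance, i.e.\ that $\lim_{|z|\to1}|D^\alpha f(z)|\ge \lim_{|z|\to1}|D^\alpha(z/(1+z))|$. The pointwise inequality $|f(z)|\ge |z|/(1+|z|)$ does not automatically survive the fractional operator $D^\alpha$, since $D^\alpha$ is not monotone on moduli in general; one must argue as in Theorem~\ref{thm3.7} that the relevant boundary behaviour of $D^\alpha f$ is controlled by the extremal function $z/(1+z)$ (its rotation), exploiting that the coefficient multipliers $\Gamma(n+1)/\Gamma(n+1-\alpha)$ are positive and that the worst case among convex $f$ is the half-plane map. Making this rigorous — rather than merely formal — is the delicate point; the rest is the same convexity-free, term-by-term comparison used throughout Section~\ref{sec3.1}, together with the observation that the series on the left of \eqref{eq3.14} is dominated term-by-term by the one in \eqref{eq3.11}, so that $P(\alpha)\ge K(\alpha)$.
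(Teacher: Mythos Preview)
Your proposal is correct and follows essentially the same route as the paper: invoke Theorem~\ref{thm2.1}(ii) together with Lemma~\ref{lem3.9} (with $a=b=1$) to bound $\mathrm{dist}(D^\alpha f(0),\partial D^\alpha f(\mathbb{D}))$ from below, use the convex coefficient bound $|a_n|\le 1$ to bound $B[D^\alpha f]$ from above, and compare. The ``main obstacle'' you flag---that the pointwise lower growth bound need not commute with $D^\alpha$---is handled in the paper exactly as formally as in your sketch, so your argument is no less complete than the paper's own proof.
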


\begin{proof}
Let $f\in \mathcal{C}$. It follows from the lower bound of Theorem \ref{thm2.1}(ii), and applying \eqref{eq3.9}, that
\begin{equation}
\label{eq3.15}
\begin{split}
\mathrm{dist}\big(D^\alpha f(0), \partial D^\alpha f(\mathbb{D}) \big) & = \lim_{|z|\to1} \big| |D^\alpha f(z)| - |D^\alpha f(0)|\big|  \\
&\ge  \lim_{|z|\to1}  \left| \frac{z^{1-\alpha}}{\Gamma(2-\alpha)} \: {_2F_1}(1;2;{2-\alpha}; -z) \right|.
\end{split}
\end{equation}
Beacuse $f\in \mathcal{C}$, $|a_n|\le 1$, $n\ge 1$. Thus, we have
\[ 
\begin{split}
B[D^\alpha f] &= \sum_{n=1}^\infty |a_n|\frac{\Gamma(n+1)}{\Gamma(n+1-\alpha)} r^{n-\alpha} = \frac{r^{1-\alpha}}{\Gamma(2-\alpha)} + \sum_{n=2}^\infty |a_n|\frac{\Gamma(n+1)}{\Gamma(n+1-\alpha)} r^{n-\alpha}\\
& \le \frac{r^{1-\alpha}}{\Gamma(2-\alpha)} + \sum_{n=2}^\infty \frac{\Gamma(n+1)}{\Gamma(n+1-\alpha)} r^{n-\alpha}.
\end{split}
\]
From \eqref{eq3.15}, $B[D^\alpha f] \le \mathrm{dist}\big(D^\alpha f(0), \partial D^\alpha f(\mathbb{D}) \big)$ yields
\[\sum_{n=1}^\infty  \frac{ \Gamma(n+1)}{\Gamma(n+1-\alpha)} r^{n-\alpha} \le \lim_{|z|\to1}  \left| \frac{z^{1-\alpha}}{\Gamma(2-\alpha)} \: {_2F_1}(1;2;{2-\alpha}; -z) \right|,\]
which is valid in $(0,1)$ only when $r \leq P(\alpha)$, where $P(\alpha)$ is defined in \eqref{eq3.14}. This concludes the proof.
\end{proof}

\begin{remark}
Letting \(\alpha\to0\) yields \(r\le P(0)={1}/{3}\), thereby recovering the classical Bohr radius (see \cite{12}) for convex function as stated in Theorem~\ref{thm2.2}. The equality of Theorem \ref{thm3.8} is attained by considering the function $f(z)=z/(1-z)$.
\end{remark}

\noindent Table~\ref{tab4} presents the computed values of the Bohr radii \(P(\alpha)\) for various choices of \(\alpha \in (0,1)\).

\begin{table}[H]
\centering
\begin{tabular}{|c|c|c|c|c|c|c|}
\hline
{\bf $\alpha$} &0&0.1&0.2&0.3&0.4&0.5 \\
\hline
{\bf $P(\alpha)$} &0.333333&0.295922&0.255637&0.212347&0.166149&0.117748\\
\hline
\end{tabular}
\caption{Computed values of Bohr radii $P(\alpha)$ for some $\alpha$-values.}
\label{tab4}
\end{table}

\subsection{Bohr radius for Fractional Integral of Analytic Functions}
\label{sec3.3}

\begin{theorem}
Let $f(z)=\sum_{n=0}^\infty a_n z^n$ with $f\in \mathcal{B}$. Then,
\[
\sum_{n=0}^\infty \frac{\Gamma(n+1)}{\Gamma(n+\alpha+1)} |a_n| r^{n+\alpha} \leq \frac{r^\alpha}{\Gamma(1+\alpha)}, \quad 0<\alpha<1,
\]
for $r \leq R_\alpha^{\text{int}}$, where $R_\alpha^{\text{int}}$ is the minimum positive root of
\[
\sum_{n=1}^\infty \frac{\Gamma(n+1)}{\Gamma(n+\alpha+1)} r^n = \dfrac{1}{2\Gamma(1+\alpha)}.
\]
\end{theorem}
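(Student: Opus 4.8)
The plan is to mirror the argument of Theorem~\ref{thm3.1} verbatim, replacing the fractional-derivative coefficients $\Gamma(n+1)/\Gamma(n+1-\alpha)$ and the power $r^{n-\alpha}$ by their fractional-integral analogues $\Gamma(n+1)/\Gamma(n+\alpha+1)$ and $r^{n+\alpha}$ from \eqref{eq2.1} and \eqref{eq2.3}. First I would set $|a_0|=a\in[0,1)$ and invoke the Schur-type coefficient bound $|a_n|\le 1-a^2$ for $n\ge1$ valid for $f\in\mathcal{B}$. Separating off the $n=0$ term gives
\[
B[I^\alpha f] \;=\; a\,\frac{r^{\alpha}}{\Gamma(1+\alpha)} + \sum_{n=1}^{\infty}\frac{\Gamma(n+1)}{\Gamma(n+\alpha+1)}|a_n|\,r^{n+\alpha}
\;\le\; a\,\frac{r^{\alpha}}{\Gamma(1+\alpha)} + (1-a^2)\sum_{n=1}^{\infty}\frac{\Gamma(n+1)}{\Gamma(n+\alpha+1)}\,r^{n+\alpha},
\]
and I would call the right-hand side $Q(a)$.

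Next I would differentiate $Q$ twice in $a$, obtaining $Q'(a)=\dfrac{r^{\alpha}}{\Gamma(1+\alpha)}-2a\sum_{n\ge1}\dfrac{\Gamma(n+1)}{\Gamma(n+\alpha+1)}r^{n+\alpha}$ and $Q''(a)=-2\sum_{n\ge1}\dfrac{\Gamma(n+1)}{\Gamma(n+\alpha+1)}r^{n+\alpha}\le0$ for $r\in(0,1)$. Concavity and $a<1$ force $Q'(a)\ge Q'(1)$, so it suffices to impose $Q'(1)\ge0$, i.e.
\[
\frac{r^{\alpha}}{\Gamma(1+\alpha)} \;\ge\; 2\sum_{n=1}^{\infty}\frac{\Gamma(n+1)}{\Gamma(n+\alpha+1)}\,r^{n+\alpha}.
\]
Dividing through by $r^{\alpha}>0$ rewrites this as $\sum_{n\ge1}\frac{\Gamma(n+1)}{\Gamma(n+\alpha+1)}r^{n}\le\frac{1}{2\Gamma(1+\alpha)}$; since the left side is continuous, increasing, and vanishes at $r=0$, this inequality holds precisely for $r\le R_\alpha^{\text{int}}$, the minimum positive root of the stated equation. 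On that range $Q$ is nondecreasing on $[0,1)$, hence $B[I^\alpha f]\le Q(a)\le Q(1)=\dfrac{r^{\alpha}}{\Gamma(1+\alpha)}$, which is the claim.

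I expect no genuine obstacle here: the only point that deserves a line of justification is that the series $\sum_{n\ge1}\frac{\Gamma(n+1)}{\Gamma(n+\alpha+1)}r^{n}$ converges for $r<1$ and is a strictly increasing continuous function of $r$ with limit $0$ at $r=0$ and a value exceeding $\frac{1}{2\Gamma(1+\alpha)}$ as $r\to1^-$, so that $R_\alpha^{\text{int}}$ is well defined and unique as the first crossing; this follows from the ratio test since $\frac{\Gamma(n+1)}{\Gamma(n+\alpha+1)}\sim n^{-\alpha}$ as $n\to\infty$. (As a sanity check, letting $\alpha\to0$ collapses the defining equation to $\sum_{n\ge1}r^{n}=\tfrac12$, i.e. $r=1/3$, recovering the classical Bohr radius of Theorem~\ref{thm1.1}, and the extremal function $f(z)=(a-z)/(1-az)$ again realizes equality.)
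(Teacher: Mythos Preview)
Your proposal is correct and is exactly the approach the paper takes: the paper's own proof merely states that the result follows from \eqref{eq2.1} by repeating the reasoning of Theorem~\ref{thm3.1}, and omits the details you have written out. Your added remarks on the existence and uniqueness of $R_\alpha^{\text{int}}$ and the $\alpha\to0$ sanity check are welcome refinements that the paper does not spell out.
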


\begin{proof}
This result is a direct consequence of \eqref{eq2.1}, and by employing the same reasoning as in the proof of Theorem~\ref{thm3.1}. Therefore, we omit the details here.
\end{proof}

\begin{remark}
Taking the limit \(\alpha\to0\) produces \(r\le R_{0}^{int}= {1}/{3}\), which recovers the classical Bohr radius asserted in Theorem~\ref{thm1.1}.
\end{remark}

\subsection{Bohr radius for Fractional Derivative of Analytic Bloch Functions}
\label{sec3.4}

\noindent An analytic function $f(z)$ on $\mathbb{D}$ is called a Bloch function if 
\[\sup_{z\in \mathbb{D}} |f'(z)| (1-|z|^2) < \infty.\]
The space ${\bf B}$ of Bloch functions is a complex Banach space with the norm $\Vert \bf . \Vert$ defined by the relation (see \cite{14})
\begin{equation}
\label{eq4.1}
\Vert f\Vert_{\bf B} = |f(0)|+ \sup_{z\in \mathbb{D}} |f'(z)| (1-|z|^2).
\end{equation}
Recently, the space ${\bf B}$ together with its various generalizations have been extensively investigated, for instance (see \cite{12,14}). For a detailed study of Bloch functions, we refer the reader to \cite{12,14}. In \cite{12}, the authors derived the Bohr radius for bounded analytic Bloch functions in the unit disk $\mathbb{D}$. Their result is summarized in the following theorem.

\begin{theorem}[{\cite[Theorem 8]{12}}]
\label{thm4.1}
Let $f \in {\bf B}$ with $\|f\|_{\bf B} \leq 1$. Then  
\[
\sum_{n=0}^{\infty} |a_n| r^n \leq 1 \quad \text{for } r \leq R = 0.55356\ldots,
\]
where $R$ is the unique positive solution of
\[
1 - R + R \log(1-R) = 0.
\]
The number R cannot be replaced by a number greater than $0.624162\ldots$.
\end{theorem}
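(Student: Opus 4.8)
The plan is to reuse the two‑step scheme already employed for Theorems~\ref{thm3.1} and \ref{thm3.4}: isolate the constant term, reduce the Bohr sum to a one‑variable inequality in $a:=|a_0|=|f(0)|$, and then read off the radius by summing the majorant series in closed form. Since $\|f\|_{\bf B}\le 1$, the Bloch seminorm obeys $\beta:=\sup_{z\in\mathbb{D}}|f'(z)|(1-|z|^{2})\le 1-a$. The crux of the argument — and what I expect to be the main obstacle — is the seminorm–majorant estimate
\[
\sum_{n=1}^{\infty}|a_n|r^{n}\;\le\;\beta\,\psi(r),\qquad \psi(r):=r-r\log(1-r)=r+\sum_{n=2}^{\infty}\frac{r^{n}}{n-1},
\]
valid for every $f\in{\bf B}$ and $0<r<1$. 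This cannot be obtained by a term‑by‑term comparison: the sharp individual coefficient bound for the Bloch class, $|a_n|\le\tfrac{n+1}{2n}\bigl(\tfrac{n+1}{n-1}\bigr)^{(n-1)/2}\beta$, is attained only by monomials and is strictly larger than $\beta/(n-1)$ for every $n\ge2$, so the coefficient extremals never occur simultaneously; one must instead exploit the tension between the coefficients encoded in the Bloch condition — e.g.\ by integrating the pointwise bound $|f'(z)|\le\beta/(1-|z|^{2})$ along with an $L^{2}$/Cauchy–Schwarz control of $\int_{0}^{2\pi}|f'(re^{i\theta})|^{2}\,d\theta$, or via the Bloch growth theorem from \cite{14}.

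Granting this estimate, the rest is routine. Using $\beta\le1-a$,
\[
\sum_{n=0}^{\infty}|a_n|r^{n}\;\le\;a+(1-a)\psi(r)\;=:\;\Psi(a),
\]
and $\Psi$ is affine in $a$ with $\Psi'(a)=1-\psi(r)$; in the same spirit as the proof of Theorem~\ref{thm3.1}, $\Psi$ is nondecreasing on $[0,1]$ — hence $\max_{[0,1]}\Psi=\Psi(1)=1$ — precisely when $\psi(r)\le1$, while for larger $r$ the maximum $\Psi(0)=\psi(r)$ exceeds $1$. As $\psi$ is increasing with $\psi(0)=0$, the Bohr radius is therefore the unique positive solution $R$ of $\psi(R)=1$, i.e.\ of $R-R\log(1-R)=1$, which rearranges to $1-R+R\log(1-R)=0$. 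Writing $h(r)=1-r+r\log(1-r)$ one has $h(0)=1>0$, $h(r)\to-\infty$ as $r\to1^{-}$, and $h'(r)=-1+\log(1-r)-\dfrac{r}{1-r}<-1$ on $(0,1)$, so $h$ is strictly decreasing, the root is unique, and numerically $R=0.55356\ldots$, as asserted.

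Finally, for the sharpness clause one exhibits an explicit $f\in{\bf B}$ with $\|f\|_{\bf B}=1$ — a suitably normalized logarithmic‑type function with explicit Taylor coefficients serves — for which $\sum_{n=0}^{\infty}|a_n|r^{n}>1$ whenever $r>0.624162\ldots$, showing the constant cannot be replaced by anything larger than $0.624162\ldots$. The residual gap between $0.55356\ldots$ and $0.624162\ldots$ is exactly the slack in the majorant estimate of the first paragraph, which is both the hardest part of the proof and the place where any sharpening of the theorem would have to originate.
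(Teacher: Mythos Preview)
Your overall architecture --- split off $|a_0|$, bound the tail by $(1-|a_0|)$ times a function of $r$ alone, then solve for the crossover --- is exactly the shape of the argument. The gap is precisely where you flag it: the majorant inequality
\[
\sum_{n\ge1}|a_n|r^{n}\;\le\;\beta\,\psi(r),\qquad \psi(r)=r-r\log(1-r),
\]
is never established, and the heuristics you offer do not produce it. Integrating the pointwise bound $|f'(z)|\le\beta/(1-|z|^{2})$ along a radius yields only the growth estimate $|f(z)-f(0)|\le\tfrac{\beta}{2}\log\tfrac{1+r}{1-r}$, which controls $|f|$ but not the majorant series; and, as you yourself note, the implicit term-by-term bound $|a_n|\le\beta/(n-1)$ is false. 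Your remaining suggestion --- an $L^{2}$/Cauchy--Schwarz control --- is in fact the correct device, but it does not prove your $\psi$-estimate; it leads to a \emph{different} bound, which is what the paper actually uses.

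Concretely, the paper (following \cite{12}, and this is the $\alpha\to0$ case of the proof of Theorem~\ref{thm4.3}) invokes the $L^{2}$ estimate recorded as Lemma~\ref{lem4.2},
\[
\sum_{n\ge1}|a_n|^{2}r^{n}\;\le\;(1-|a_0|)^{2}\log\frac{1}{1-r},
\]
and applies Cauchy--Schwarz against $\sum_{n\ge1}r^{n}=r/(1-r)$ to get
\[
\sum_{n\ge1}|a_n|r^{n}\;\le\;(1-|a_0|)\,\sqrt{\frac{r}{1-r}\,\log\frac{1}{1-r}}\;=:(1-|a_0|)\,G(r).
\]
Then $|a_0|+(1-|a_0|)G(r)\le1$ iff $G(r)\le1$, i.e.\ $-r\log(1-r)\le1-r$, i.e.\ $1-r+r\log(1-r)\ge0$. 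This is the \emph{same} defining equation you reach via $\psi(r)\le1$, which is why your endpoint computation comes out right even though your majorant function differs from the paper's; the coincidence is that $G(r)=1$ and $\psi(r)=1$ both reduce to $-r\log(1-r)=1-r$. If you replace your unproven $L^{1}$ bound by Lemma~\ref{lem4.2} plus Cauchy--Schwarz, the rest of your write-up (the affine-in-$a$ reduction, the monotonicity of $h$, and the sharpness clause via an explicit logarithmic extremal) goes through essentially verbatim.
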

\noindent In this section, we derive the Bohr radius for the fractional derivative of $f \in {\bf B}$, which serves as a generalization of Theorem~\ref{thm4.1}. We formulate the following lemma, which is motivated by arguments appearing in the proof of Theorem 8 in \cite{12}. It will be useful in the proof of our theorem.

\begin{lemma}[{\cite[Proof of Theorem 8]{12}}]
\label{lem4.2}
Let $f(z)=\sum\limits_{n=0}^\infty a_n z^n \in {\bf B}$ with $\|f\|_{\bf B} \leq 1$. Then  
\begin{equation}
\label{eq4.2}
\sum_{n=1}^{\infty} |a_n|^2 r^n \le (1-|a_0|)^2 \ln \frac{1}{1-r},	\quad r<1. 
\end{equation}
\end{lemma}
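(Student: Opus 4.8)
The plan is to start from the normalization $\|f\|_{\bf B}\le 1$, which by \eqref{eq4.1} gives $|a_0| + \sup_{z\in\mathbb{D}}|f'(z)|(1-|z|^2)\le 1$, hence $\sup_{z\in\mathbb{D}}|f'(z)|(1-|z|^2)\le 1-|a_0|$. Writing $f'(z)=\sum_{n=1}^\infty n a_n z^{n-1}$, the idea is to integrate $|f'(z)|^2$ over a circle $|z|=\rho$: by Parseval's identity, $\frac{1}{2\pi}\int_0^{2\pi}|f'(\rho e^{i\theta})|^2\,d\theta = \sum_{n=1}^\infty n^2 |a_n|^2 \rho^{2n-2}$. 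On the other hand, the pointwise bound gives $|f'(\rho e^{i\theta})|^2 \le (1-|a_0|)^2/(1-\rho^2)^2$, so $\sum_{n=1}^\infty n^2|a_n|^2\rho^{2n-2} \le (1-|a_0|)^2/(1-\rho^2)^2$.

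From here the plan is to convert this into the stated bound by an integration trick. Multiply both sides by $\rho$ and integrate $\rho$ from $0$ to $\sqrt{r}$ (equivalently, substitute $t=\rho^2$ and integrate $t$ from $0$ to $r$). The left side becomes $\sum_{n=1}^\infty n^2|a_n|^2 \int_0^{\sqrt r}\rho^{2n-1}\,d\rho = \sum_{n=1}^\infty \frac{n}{2}|a_n|^2 r^n$; actually one wants $\sum |a_n|^2 r^n$ on the left, so instead I would divide the Parseval inequality by something first. The cleanest route: from $\sum n^2 |a_n|^2 \rho^{2n-2}\le (1-|a_0|)^2(1-\rho^2)^{-2}$, drop the factor $n^2$ (since $n^2\ge n\ge 1$, this only weakens the left side if we keep a single power, so instead keep it and integrate to gain the reciprocal). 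Precisely, integrate $\int_0^r (\,\cdot\,)\,\frac{dt}{?}$: with $t=\rho^2$, $\sum_{n} n^2|a_n|^2 t^{n-1}\le (1-|a_0|)^2(1-t)^{-2}$, and $\int_0^r t^{n-1}\,dt = r^n/n$, while $\int_0^r (1-t)^{-2}\,dt = r/(1-r)$; that gives $\sum n|a_n|^2 r^n \le (1-|a_0|)^2 r/(1-r)$, not quite the target. To land on $\ln\frac{1}{1-r}$ one integrates once more (or uses a different starting inequality keeping only one power of $n$): from $\sum_n n|a_n|^2 t^{n-1} \le (1-|a_0|)^2(1-t)^{-1}$ — which follows because $\sum n^2|a_n|^2 t^{n-1}\le (1-|a_0|)^2(1-t)^{-2}$ and $(1-t)^{-1}\le(1-t)^{-2}$ together with $n\le n^2$ is the wrong direction, so instead I would obtain the single-$n$-power inequality directly by noting $\frac{1}{2\pi}\int_0^{2\pi}|f'(te^{i\theta/\cdots})|\cdots$ — the honest derivation is: $\sum_{n\ge1} n |a_n|^2 t^{n-1}$ is the area-type integral $\frac{1}{\pi}\iint_{|z|^2\le t}|f'(z)|^2\,dA(z)/t$-style expression — then $\int_0^r t^{n-1}dt = r^n/n$ gives exactly $\sum|a_n|^2 r^n \le (1-|a_0|)^2\int_0^r (1-t)^{-1}\,dt = (1-|a_0|)^2\ln\frac{1}{1-r}$.

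The main obstacle is getting the combinatorial bookkeeping of the powers of $n$ exactly right so that the final integral produces $\ln\frac{1}{1-r}$ rather than $r/(1-r)$ or $\tfrac12\log^2$; the correct pivot is the inequality $\sum_{n=1}^\infty n|a_n|^2 t^{n-1} \le (1-|a_0|)^2 (1-t)^{-1}$, which I would justify by integrating the Parseval identity for $f'$ against $\rho\,d\rho$ over $[0,\sqrt t\,]$ and using $\sup|f'(z)|^2(1-|z|^2)^2 \le (1-|a_0|)^2$, and then integrating that inequality once more in $t$ over $[0,r]$ while using $\int_0^r t^{n-1}\,dt = r^n/n$ to cancel the factor $n$ and $\int_0^r (1-t)^{-1}\,dt = \ln\frac{1}{1-r}$ on the right. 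Once the correct intermediate inequality is identified, the rest is a one-line integration, and monotone convergence justifies interchanging sum and integral since all terms are nonnegative.
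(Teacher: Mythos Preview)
The paper does not actually prove Lemma~\ref{lem4.2}; it simply extracts the statement from the proof of Theorem~8 in \cite{12} and cites that reference. So there is no ``paper's own proof'' to compare against beyond the standard argument in \cite{12}.

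Your plan, once the exploratory back-and-forth is discarded, is correct and is exactly that standard argument. The clean version is: from $\|f\|_{\bf B}\le 1$ one has $|f'(z)|^2(1-|z|^2)^2\le(1-|a_0|)^2$; Parseval on circles gives $\sum_{n\ge1} n^2|a_n|^2\rho^{2n-2}\le(1-|a_0|)^2(1-\rho^2)^{-2}$; multiplying by $\rho$ and integrating over $[0,\sqrt t\,]$ (equivalently, taking the area integral over $|z|<\sqrt t$) yields $\sum_{n\ge1} n|a_n|^2 t^{n-1}\le(1-|a_0|)^2(1-t)^{-1}$; a second integration in $t$ over $[0,r]$ then gives $\sum_{n\ge1}|a_n|^2 r^n\le(1-|a_0|)^2\ln\frac{1}{1-r}$. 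Your final paragraph states precisely this chain, and the computations check out. The earlier paragraphs where you try to drop factors of $n$ or reach $r/(1-r)$ are dead ends, as you yourself recognize; you should excise them and present only the two-integration argument.
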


\begin{theorem}
\label{thm4.3}
Let $f(z)=\sum_{n=0}^\infty a_n z^n \in {\bf B}$ with with $\|f\|_{\bf B} \leq 1$ and let $0<\alpha<1$. Then
\begin{equation}
\label{eq4.3}
B[D^\alpha f] = \sum_{n=0}^\infty \frac{\Gamma(n+1)}{\Gamma(n+1-\alpha)} |a_n| r^{n-\alpha} \leq \frac{r^{-\alpha}}{\Gamma(1-\alpha)}, \quad for \quad r \leq M(\alpha),
\end{equation}
where $M(\alpha)$ is the minimum positive root of
\[
\big({_2F_1}(1,1;2-\alpha;r)\big) \big({_2F_1}(2,1;2-\alpha;r)\big) = \frac{(1-\alpha)^2}{r^2}.
\]
\end{theorem}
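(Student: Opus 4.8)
\textbf{Proof proposal for Theorem~\ref{thm4.3}.}

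The plan is to mirror the split used in Theorems~\ref{thm3.1} and~\ref{thm3.4}: isolate the $n=0$ term, which already contributes exactly $|a_0|\,r^{-\alpha}/\Gamma(1-\alpha)$, and bound the tail $\sum_{n\ge1}\frac{\Gamma(n+1)}{\Gamma(n+1-\alpha)}|a_n|r^{n-\alpha}$ using the Bloch coefficient estimate in Lemma~\ref{lem4.2}. The obstacle here is that Lemma~\ref{lem4.2} controls $\sum_{n\ge1}|a_n|^2 r^n$, not $\sum_{n\ge1}|a_n|r^n$; so first I would apply Cauchy--Schwarz to the tail, writing
\[
\sum_{n=1}^\infty \frac{\Gamma(n+1)}{\Gamma(n+1-\alpha)}|a_n|r^{n-\alpha}
= r^{-\alpha}\sum_{n=1}^\infty \left(\frac{\Gamma(n+1)}{\Gamma(n+1-\alpha)}r^{n/2}\right)\big(|a_n|r^{n/2}\big)
\le r^{-\alpha}\left(\sum_{n=1}^\infty \frac{\Gamma(n+1)^2}{\Gamma(n+1-\alpha)^2}r^{n}\right)^{1/2}\left(\sum_{n=1}^\infty |a_n|^2 r^{n}\right)^{1/2}.
\]
Then Lemma~\ref{lem4.2} replaces the second factor by $(1-|a_0|)\big(\ln\frac{1}{1-r}\big)^{1/2}$.

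Next I would identify the first gamma-ratio sum as a hypergeometric value. Using $\frac{\Gamma(n+1)^2}{\Gamma(n+1-\alpha)^2}=\frac{(1)_n(1)_n}{(1-\alpha)_n}\cdot\frac{\Gamma(n+1)}{\Gamma(n+1-\alpha)\,\Gamma(1-\alpha)^{-1}\cdot\text{(const)}}$ is not quite a single ${_2F_1}$; instead the cleaner route, and the one the statement evidently intends, is to note that the two separate factors ${_2F_1}(1,1;2-\alpha;r)$ and ${_2F_1}(2,1;2-\alpha;r)$ arise from estimating $\frac{\Gamma(n+1)}{\Gamma(n+1-\alpha)}$ by its value at the relevant parameters. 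Concretely, $\sum_{n\ge0}\frac{\Gamma(n+1)}{\Gamma(n+1-\alpha)}r^n=\frac{1}{\Gamma(1-\alpha)}{_2F_1}(1,1;1-\alpha;r)$ up to index shifts, and squaring-type manipulations together with the identity $(1-\alpha)\,{_2F_1}(a,1;1-\alpha;r)$ relates to ${_2F_1}(a,1;2-\alpha;r)$ give the product form $\big({_2F_1}(1,1;2-\alpha;r)\big)\big({_2F_1}(2,1;2-\alpha;r)\big)$ after the factor $r^{-2}$ and $(1-\alpha)^2$ are absorbed. I would carry out this bookkeeping carefully, checking the Pochhammer shifts, since this is where the precise form of $M(\alpha)$ is pinned down.

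With these substitutions, the tail is bounded by a function of $r$ and $|a_0|$ alone, and the full Bohr sum is at most
\[
P(a)=a\,\frac{r^{-\alpha}}{\Gamma(1-\alpha)}+(1-a)\,r^{-\alpha}\,\Phi(r),\qquad a=|a_0|\in[0,1),
\]
where $\Phi(r)$ collects the hypergeometric factor times $\big(\ln\frac{1}{1-r}\big)^{1/2}$. Since $P$ is \emph{affine} in $a$ (the Bloch bound is linear in $1-|a_0|$, unlike the quadratic $1-a^2$ bound for $\mathcal B$), monotonicity in $a$ is immediate once the coefficient of $a$ is nonnegative, i.e.\ once $\frac{1}{\Gamma(1-\alpha)}\ge \Phi(r)$; that inequality, after clearing denominators and squaring, is exactly $\big({_2F_1}(1,1;2-\alpha;r)\big)\big({_2F_1}(2,1;2-\alpha;r)\big)\le (1-\alpha)^2/r^2$, which holds for $r\le M(\alpha)$. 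Then $P(a)\le P(1)=r^{-\alpha}/\Gamma(1-\alpha)$, giving the claim.

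The main obstacle is the second step: correctly reducing the Cauchy--Schwarz factor $\big(\sum_{n\ge1}\frac{\Gamma(n+1)^2}{\Gamma(n+1-\alpha)^2}r^n\big)^{1/2}$ to the stated \emph{product} of two distinct Gauss hypergeometric functions. This is not a literal Cauchy--Schwarz outcome — a single ${_2F_1}$ would have squared Pochhammer symbols in the denominator too — so the argument must instead bound $\frac{\Gamma(n+1)}{\Gamma(n+1-\alpha)}$ (which is increasing in $n$) against a product-splitting of the form $\frac{\Gamma(n+1)}{\Gamma(n+1-\alpha)}\le \big(\text{something}\big)_n\big(\text{something}\big)_n/(2-\alpha)_n$-type expressions, and I would need to verify that inequality termwise and track the $r^{-2}$, $(1-\alpha)^2$ constants through to the final transcendental equation defining $M(\alpha)$. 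Once that reduction is justified, everything else follows the template of Theorems~\ref{thm3.1} and~\ref{thm3.4}.
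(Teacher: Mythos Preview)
Your overall architecture (split off $n=0$, use Cauchy--Schwarz and Lemma~\ref{lem4.2} on the tail, then linearity in $a=|a_0|$) matches the paper. The gap is in the Cauchy--Schwarz step, and you have correctly flagged it yourself: with your splitting
\[
\sum_{n\ge1}\frac{\Gamma(n+1)}{\Gamma(n+1-\alpha)}|a_n|r^{n-\alpha}
\le r^{-\alpha}\Bigl(\sum_{n\ge1}\frac{\Gamma(n+1)^2}{\Gamma(n+1-\alpha)^2}r^n\Bigr)^{1/2}\Bigl(\sum_{n\ge1}|a_n|^2 r^n\Bigr)^{1/2},
\]
the first factor has a \emph{squared} gamma ratio and is not a ${_2F_1}$; there is no termwise inequality that reduces it to the stated product, so the route you sketch (``bound $\Gamma(n+1)/\Gamma(n+1-\alpha)$ against a product-splitting'') does not lead to the equation defining $M(\alpha)$.

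The paper avoids this by distributing the weight $w_n=\frac{\Gamma(n+1)}{\Gamma(n+1-\alpha)}r^{n-\alpha}$ symmetrically in Cauchy--Schwarz:
\[
\sum_{n\ge1}|a_n|\,w_n \le \Bigl(\sum_{n\ge1}|a_n|^2 w_n\Bigr)^{1/2}\Bigl(\sum_{n\ge1} w_n\Bigr)^{1/2}.
\]
The second factor is then $\frac{r^{1-\alpha}}{\Gamma(2-\alpha)}{_2F_1}(2,1;2-\alpha;r)$ directly. For the first factor the paper applies $D^\alpha$ to \emph{both sides} of Lemma~\ref{lem4.2} (legitimate because that lemma is coefficientwise, $|a_n|^2\le (1-|a_0|)^2/n$), turning $\ln\frac{1}{1-r}=\sum_{n\ge1}r^n/n$ into $\frac{r^{1-\alpha}}{\Gamma(2-\alpha)}{_2F_1}(1,1;2-\alpha;r)$. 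The product of two genuine ${_2F_1}$'s then falls out of the square root with no further estimation, and squaring $G(r,\alpha)\le r^{-\alpha}/\Gamma(1-\alpha)$ together with $\Gamma(2-\alpha)=(1-\alpha)\Gamma(1-\alpha)$ gives exactly $(1-\alpha)^2/r^2$. So the missing idea is: put the fractional weight inside the $|a_n|^2$ sum \emph{before} invoking the Bloch estimate, rather than using Lemma~\ref{lem4.2} in its unweighted form.
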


\begin{proof}
Observe that
\begin{equation}
\label{eq4.4}
B[D^\alpha f] = |a_0| \frac{r^{-\alpha}}{\Gamma(1-\alpha)} + \sum_{n=1}^\infty |a_n|\frac{\Gamma(n+1)}{\Gamma(n+1-\alpha)} r^{n-\alpha}.
\end{equation}
\noindent Applying the Riemann–Liouville fractional derivative \eqref{eq2.2} to both sides of \eqref{eq4.2}, and following the idea used in the proof of \eqref{eq3.9}, we obtain
\begin{equation}
\label{eq4.5}
\sum_{n=1}^\infty |a_n|^2\frac{\Gamma(n+1)}{\Gamma(n+1-\alpha)} r^{n-\alpha} \le (1-|a_0|)^2 \frac{r^{1-\alpha} }{\Gamma(2-\alpha)}\, {_2F_1}(1;  \,1; \, 2-\alpha; \, r).
\end{equation}
By the classical Cauchy-Schwarz inequality and using \eqref{eq4.5}, we have
\[\begin{split}
\sum_{n=1}^\infty |a_n|\frac{\Gamma(n+1) r^{n-\alpha}}{\Gamma(n+1-\alpha)}  &\le \sqrt{\sum_{n=1}^\infty |a_n|^2 \frac{\Gamma(n+1)r^{n-\alpha}}{\Gamma(n+1-\alpha)} }  \sqrt{\sum_{n=1}^\infty \frac{\Gamma(n+1)r^{n-\alpha}}{\Gamma(n+1-\alpha)} }\\
& \le   \sqrt{(1-|a_0|)^2 \frac{r^{1-\alpha} }{\Gamma(2-\alpha)}\, {_2F_1}(1;  \,1; \, 2-\alpha; \, r)} \\ 
&   \hspace{2cm} \times \sqrt{  \frac{r^{1-\alpha} }{\Gamma(2-\alpha)} \, {_2F_1}(2,1;2-\alpha;r)}\\
& = (1-|a_0|)\,\frac{r^{1-\alpha}}{\Gamma(2-\alpha)} \\
& \quad \times  \sqrt{\, \big({_2F_1}(1,1;2-\alpha;r)\big) \big({_2F_1}(2,1;2-\alpha;r)\big)\,}\\
& = (1-|a_0|) G(r, \alpha),
\end{split}\]
where
\[
G(r, \alpha)= \frac{r^{1-\alpha}}{\Gamma(2-\alpha)} \sqrt{\, \big({_2F_1}(1,1;2-\alpha;r)\big) \big({_2F_1}(2,1;2-\alpha;r)\big)}.
\]
\noindent Consequently, \eqref{eq4.4} reduces to
\begin{equation}
\label{eq4.6}
   B[D^\alpha f] \le  |a_0| \frac{r^{-\alpha}}{\Gamma(1-\alpha)}  + (1-|a_0|)\,G(r,\alpha).
\end{equation}
Furthermore, from \eqref{eq4.3}, we have
\[
|a_0| \frac{r^{-\alpha}}{\Gamma(1-\alpha)}  + (1-|a_0|)\,G(r,\alpha) \le \frac{r^{-\alpha}}{\Gamma(1-\alpha)},
\]
and upon simplification we get
\[G(r,\alpha) \le \frac{r^{-\alpha}}{\Gamma(1-\alpha)}, \quad r\in(0,1).\]
That is,
\begin{equation}
\label{eq4.7}
 \big({_2F_1}(1,1;2-\alpha;r)\big) \big({_2F_1}(2,1;2-\alpha;r)\big) \le \frac{(1-\alpha)^2}{r^2}.
\end{equation}
Solving \eqref{eq4.7} for $r\in(0,1)$, we get $r\le M(\alpha)$, where $M(\alpha)$ is the positive root of $ r^2 \big({_2F_1}(1,1;2-\alpha;r)\big) \big({_2F_1}(2,1;2-\alpha;r)-1\big) = (1-\alpha)^2$. Thus, the proof is complete.
\end{proof}

\begin{remark}
If we allow $\alpha \to 0$, the inequality \ref{eq4.7} reduces to
\begin{equation}
\label{eq4.8}
 \big({_2F_1}(1,1;2-\alpha;r)\big) \big({_2F_1}(2,1;2-\alpha;r)\big) \le \frac{1}{r^2}.
\end{equation}
Using the identities
\[
{}_2F_1(1,1;2;r) = -\frac{\ln(1-r)}{r}, 
\qquad 
{}_2F_1(2,1;2;r) = \frac{1}{1-r},
\]
the ineqality \eqref{eq4.8} simplifies to
\[
-\frac{\ln (1-r)}{1-r} \le \frac{1}{r}, \quad 0<r<1.
\]
This yields ($r\le 0.55356\ldots$) precisely the same Bohr radius as obtained in Theorem~8 of \cite{12} as stated in Theorem \ref{thm4.1}, thereby showing that our fractional result is a genuine extension of the classical setting.
\end{remark}

\begin{table}[H]
\centering
\begin{tabular}{|c|c|c|c|c|c|c|}
\hline
{\bf $\alpha$} &0&0.1&0.2&0.5&0.8&0.9 \\
\hline
{\bf $M(\alpha)$} &0.553567&0.513532&0.471743&0.332954&0.160589&0.088162\\
\hline
\end{tabular}
\caption{Numerical values of the Bohr radii $M(\alpha)$ obtained from the equation \eqref{eq4.8}.}
\label{tab6}
\end{table}

\noindent Table \ref{tab6} shows that $M(\alpha)$ is a decreasing function of $\alpha$, with the largest radius attained at $\alpha=0$ (the classical case), and $M(\alpha)\to 0$ as $\alpha \to 1$. This confirms that the fractional parameter $\alpha$ shrinks the Bohr radius, and the case $\alpha=0$ recovers the result in Theorem~8 of \cite{12}.

\section{Conclusion}
In this paper, we have generalized the Bohr phenomenon to fractional calculus by applying Riemann–Liouville differential and integral operators to analytic, univalent, convex, and Bloch functions. The resulting inequalities reveal how the Bohr radius depends on the fractional order, with the classical Bohr radius recovered as a limiting case. Our computations demonstrate that increasing the fractional parameter reduces the Bohr radius, thereby capturing a smooth transition from classical to fractional calculus. These results not only unify several known inequalities under a single framework but also open new perspectives for extending Bohr-type problems to broader classes of functions and operators in complex and geometric function theory.

\end{document}